\newtheorem{thm}{Theorem}[section]
\newtheorem{lem}[thm]{Lemma}
\newtheorem{prop}[thm]{Proposition}
\theoremstyle{definition}
\newtheorem{rem}[thm]{Remark}
\theoremstyle{remark}
\newcommand{\ds}{\displaystyle}
\newcommand{\R}{\mathbb{R}}
\newcommand{\Rn}{\mathbb R^n}
\newcommand{\de}{\partial}
\DeclareMathOperator{\dive}{div}
\DeclareMathOperator{\Qp}{\mathcal Q_{p}}
\patchcmd{\abstract}{\scshape\abstractname}{\textbf{\abstractname}}{}{}
\def\@makefnmark{} %note a di pagina senza numero 2
\title{ 
Sharp estimates on the first Dirichlet eigenvalue of nonlinear elliptic operators via maximum principle
}
\author{Francesco Della Pietra%, %
\thanks{Universit\`a degli studi di Napoli Federico II, Dipartimento di Matematica e Applicazioni ``R. Caccioppoli'', Via Cintia, Monte S. Angelo - 80126 Napoli, Italia. Email: f.dellapietra@unina.it} 
{, }
Giuseppina di Blasio%, %
\thanks{Universit\`a degli Studi della Campania ``Luigi Vanvitelli'', viale Lincoln, 5 81100 Caserta, Italy. Email:
giuseppina.diblasio@unicampania.it}
{, } 
Nunzia Gavitone %
\thanks{Universit\`a degli studi di Napoli Federico II, Dipartimento di Matematica e Applicazioni ``R. Caccioppoli'', Via Cintia, Monte S. Angelo - 80126 Napoli, Italia. Email: nunzia.gavitone@unina.it} 
}
\begin{document}

%\doublespacing
\maketitle

%\tableofcontents

\begin{abstract} In this paper we  study optimal lower and upper bounds for functionals involving the first Dirichlet eigenvalue $\lambda_{F}(p,\Omega)$ of the anisotropic $p$-Laplacian, $1<p<+\infty$.  Our aim is to enhance how, by means of the $\mathcal P$-function method, it is possible to get several sharp estimates for $\lambda_{F}(p,\Omega)$ in terms of several geometric quantities associated to the domain. The $\mathcal P$-function method is based on a maximum principle for a suitable function involving the eigenfunction and its gradient.

\medskip
\textsc{Keywords:} Dirichlet eigenvalues, anisotropic operators, optimal estimates

\medskip
\textsc{MSC 2010: 35P30, 49Q10} 
\end{abstract}

\section{Introduction}

Given a bounded domain $\Omega\subset \R^{N}$ and $p\in]1,+\infty[$, let us consider the first Dirichlet eigenvalue of the anisotropic $p$-Laplacian, that is:
\begin{equation*}
\label{eigint}
\lambda_{F}(p,\Omega)=\min_{\psi\in W_0^{1,p}(\Omega)\setminus\{0\} } \frac{\displaystyle\int_\Omega F(\nabla \psi)^p
     dx }{\displaystyle\int_\Omega|\psi|^p dx},
\end{equation*}
where $F : \mathbb{R}^N \to [0, +\infty[$, $N\ge 2$, is a convex, even,  $1$-homogeneous and $C^{3,\beta}(\mathbb{R}^N\setminus \{0\})$  function such that
$[F^{p}]_{\xi\xi}\text{ is positive definite in } \R^{N}\setminus\{0\}$, $1<p<+\infty$. 
We are interested in the study of optimal lower and upper bounds for functionals involving $\lambda_{F}(p,\Omega)$. In this order of ideas,  our aim is enhance how these estimates may be obtained as a consequence of a maximum principle for a function which involves an eigenfunction and its gradient, namely the so-called $\mathcal P$-function, introduced by L.E. Payne in the case of the classical Euclidean Laplace operator. We refer the reader to the book by Sperb \cite{sp}, and the references therein contained, for a survey on the $\mathcal P$-function method in the Laplacian case and its applications. More precisely, if $u$ is a positive eigenfunction associated to $\lambda_{F}(p,\Omega)$, we introduce the following function:
\begin{equation}
\label{pfi}
\mathcal P :=(p-1)F^p(\nabla u)+\lambda_{F}(p,\Omega)\left(u^p-M^p\right),
\end{equation}
where $M$ is the maximum value of $u$. We show that the function $\mathcal P$ verifies a maximum principle in $\overline \Omega$ in order to get a pointwise estimate for the gradient in terms of $u$. This is the starting point in order to prove several useful bounds, involving  quantities which depend on the domain $\Omega$. As a matter of fact, the use of the $\mathcal P$-function method in the anisotropic setting has been studied in the recent paper \cite{dpgg}. Here the authors consider the $p$-anisotropic torsional rigidity 
\begin{equation}
\label{t}
T^{p-1}_{F}(p,\Omega)=\sup_{\psi\in W_0^{1,p}(\Omega)\setminus\{0\}} \frac{\displaystyle\left(\int_\Omega|\psi| dx\right)^{p}}{\displaystyle\int_\Omega F(\nabla \psi)^p
     dx },
\end{equation}
and show  optimal bounds for two functionals involving $T_{F}(p,\Omega)$ and some geometric quantities related to the domain. In this spirit, we aim to analyse the case of the eigenvalue problem. Given a convex, bounded domain $\Omega\subset \R^N$ our main results can be summarized as follows. We prove the anisotropic version of Hersch inequality for $\lambda_{F}(p,\Omega)$, namely that
\begin{equation}
\label{Hi}
\lambda_F(p,\Omega) \ge \left(\frac{\pi_{p}}{2}\right)^{p}\frac{1}{R_F(\Omega)^{p}},
\end{equation} 
where $R_F(\Omega)$ is the anisotropic inradius defined in Section 2 and
\begin{equation}
\label{defpp}
\pi_p:=2\int_0^{(p-1)^{1/p}} \frac{dt}{[1-t^p/(p-1)]^{1/p}}
=2\pi\frac{(p-1)^{1/p}}{p\sin\frac\pi p}.
\end{equation}
As regards the Euclidean setting, for $p=2$ the inequality \eqref{Hi} has been proved by Hersch \cite{he} and improved  in \cite{pr}, and generalized for any $p$ in \cite{kaj} (see also \cite{po}). In the general anisotropic case or $p=2$ it has been studied in \cite{bgm,wxpac}.
Another consequence of the maximum principle for $\mathcal P$ that we obtain is following inequality: 
\begin{equation}
\label{payneineqi}
 \left(\frac{p-1}{p}\right)^{p-1}\left(\frac{\pi_{p}}{2}\right)^{p} \le \lambda_{F}(p,\Omega) M_{v_{\Omega}}^{p-1},
\end{equation}
where $v_{\Omega}$ is the positive maximizer of \eqref{t} such that 
\begin{equation*}
  \label{tors0i}
T_F(p,\Omega) =\int_\Omega v_\Omega \, 
    dx,
\end{equation*}
and $M_{v_{\Omega}}$ is the maximum of $v_{\Omega}$.
Inequality \eqref{payneineqi}, in the Euclidean case ($p=2$), has been first proved in \cite{pay}, and then studied also, for instance, in \cite{sp, vdb, hlp}.%, where also the optimality of the constant is proved.

Last main result we show is the following. Let $u$ be a first eigenfunction relative to $\lambda_{F}(p,\Omega)$ and consider the so-called anisotropic ``{efficiency ratio}''
\begin{equation*}
\label{effi}
E_{F}(p,\Omega):=\frac{\|u\|_{p-1}}{|\Omega|^{\frac{1}{p-1}}\|u\|_{\infty}}. %\le \frac{1}{(p-1)^{1-\frac{1}{p}}} \left(\frac{2}{\pi_{p}}\right)^{\frac{1}{p-1}}.
\end{equation*}
Then we prove that
\begin{equation}
\label{epi}
E_{F}(p,\Omega)\le \frac{1}{(p-1)^{\frac{1}{p}}} \left(\frac{2}{\pi_{p}}\right)^{\frac{1}{p-1}},
\end{equation}
where $\pi_p$ is defined in \eqref{defpp}. In the Euclidean case and $p=2$, this inequality is due to Payne and Stakgold, who proved it in \cite{ps}. 

Finally, we  show the optimality in \eqref{Hi} and \eqref{payneineqi}, while the optimality of \eqref{epi} in the class of convex sets is still an open problem. 

As matter of fact, the convexity assumption in \eqref{Hi}, \eqref{payneineqi} and \eqref{epi} can be weakened, being them valid also in the case of smooth domains with anisotropic nonnegative mean curvature (see Section 2 for the definition).

In the present paper we also emphasize the relation of $\lambda_{F}(p,\Omega)$ with the so-called anisotropic Cheeger constant $h_F(\Omega)$ (See Section 3 for the definition). Indeed, in the class of convex sets we prove the validity of a Cheeger type inequality for $\lambda_{F}$, as well as a reverse Cheeger inequality.

The paper is organized as follows. 

In Section 2 we fix the notation and recall some basic facts regarding the eigenvalue problem for the anisotropic $p$-Laplacian, and the torsional rigidity $T_{F}(p,\Omega)$.

Section 3 is devoted to the study of $h_F(\Omega)$. More precisely we recall the definition, the main properties and we prove optimal lower and upper bounds for $h_F(\Omega)$ in terms of the anisotropic inradius $R_F(\Omega)$ of a convex set $\Omega$. In Section 4 we prove that the $\mathcal P$-function in \eqref{pfi} verifies a maximum principle and finally in the last section we prove the quoted results \eqref{Hi}, \eqref{payneineqi}, \eqref{epi} and a reverse Cheeger inequality investigating also the optimality issue.

\section{Notation and preliminaries}%\label{notprel}
Throughout the paper we will consider a convex even 1-homogeneous function 
\[
\xi\in  \R^{N}\mapsto F(\xi)\in [0,+\infty[,
\] 
that is a convex function such that
\begin{equation}
\label{eq:omo}
F(t\xi)=|t|F(\xi), \quad t\in \R,\,\xi \in  \R^{N}, 
\end{equation}
 and such that
\begin{equation}
\label{eq:lin}
a|\xi| \le F(\xi),\quad \xi \in  \R^{N},
\end{equation}
for some constant $a>0$. The hypotheses on $F$ imply there exists $b\ge a$ such that
\begin{equation*}
%\label{upb}
F(\xi)\le b |\xi|,\quad \xi \in  \R^{N}.
\end{equation*}
Moreover, throughout the paper we will assume that $F\in C^{3,\beta}(\mathbb R^{N}\setminus \{0\})$, and
\begin{equation}
\label{strong}
[F^{p}]_{\xi\xi}(\xi)\text{ is positive definite in } \R^{N}\setminus\{0\},
\end{equation}
with $1<p<+\infty$. 

The hypothesis \eqref{strong} on $F$ ensures that the operator 
\[
\Qp u:= \dive \left(\frac{1}{p}\nabla_{\xi}[F^{p}](\nabla u)\right)
\] 
is elliptic, hence there exists a positive constant $\gamma$ such that
\begin{equation*}
\frac1p\sum_{i,j=1}^{n}{\nabla^{2}_{\xi_{i}\xi_{j}}[F^{p}](\eta)
  \xi_i\xi_j}\ge
\gamma |\eta|^{p-2} |\xi|^2, 
\end{equation*}
for any $\eta \in \Rn\setminus\{0\}$ and for any $\xi\in \Rn$. 
%\begin{rem}
%We stress that for $p\ge 2$ the condition  
%\begin{equation*}
%\nabla^{2}_{\xi}[F^{2}](\xi)\text{ is positive definite in } \R^{N}\setminus\{0\},
%\end{equation*}
%implies \eqref{strong}.
%\end{rem}
The polar function $F^o\colon \R^N \rightarrow [0,+\infty[$ 
of $F$ is defined as
\begin{equation*}
F^o(v)=\sup_{\xi \ne 0} \frac{\langle \xi, v\rangle}{F(\xi)}. 
\end{equation*}
 It is easy to verify that also $F^o$ is a convex function
which satisfies properties \eqref{eq:omo} and
\eqref{eq:lin}. Furthermore, 
\begin{equation*}
F(v)=\sup_{\xi \ne 0} \frac{\langle \xi, v\rangle}{F^o(\xi)}.
\end{equation*}
From the above property it holds that
\begin{equation}
\label{imp}
|\langle \xi, \eta\rangle| \le F(\xi) F^{o}(\eta), \qquad \forall \xi, \eta \in  \R^{N}.
\end{equation}
The set
\[
\mathcal W = \{  \xi \in  \R^N \colon F^o(\xi)< 1 \}
\]
is the so-called Wulff shape centered at the origin. We put
$\kappa_N=|\mathcal W|$, where $|\mathcal W|$ denotes the Lebesgue measure
of $\mathcal W$. More generally, we denote with $\mathcal W_r(x_0)$
the set $r\mathcal W+x_0$, that is the Wulff shape centered at $x_0$
with measure $\kappa_Nr^N$, and $\mathcal W_r(0)=\mathcal W_r$.

%We observe that $F$ is the support function  of $\overline{\mathcal W}$. 

The following properties of $F$ and $F^o$ hold true:
\begin{gather*}
\label{prima}
 \langle F_\xi(\xi) , \xi \rangle= F(\xi), \quad  \langle F_\xi^{o} (\xi), \xi \rangle
= F^{o}(\xi),\qquad \forall \xi \in
 \R^N\setminus \{0\}
 \\
 \label{seconda} F(  F_\xi^o(\xi))=F^o(  F_\xi(\xi))=1,\quad \forall \xi \in
 \R^N\setminus \{0\}, 
\\
\label{terza} 
F^o(\xi)   F_\xi( F_\xi^o(\xi) ) = F(\xi) 
 F_\xi^o\left(  F_\xi(\xi) \right) = \xi\qquad \forall \xi \in
 \R^N\setminus \{0\}. 
\end{gather*}

\subsection{Anisotropic mean curvature}
Let $\Omega$ be a $C^{2}$ bounded domain, and $n_{\mathcal E}(x)$ be the unit outer normal at $x\in\de\Omega$, and let $u\in C^{2}(\overline\Omega)$ such that $\Omega_t=\{u>t\}$, $\de \Omega_t=\{u=t\}$ and $\nabla u\ne 0$ on $\de \Omega_t$.  The anisotropic outer normal $n_{F}$ to $\de \Omega_t$ is given by
  \[
  n_F(x)= F_{\xi}(n_{\mathcal E}(x))= F_{\xi}\left(-\nabla u\right),\quad x\in \de \Omega.
  \]
It holds 
 \[
  F^o(n_F)=1.  
  \]
  The anisotropic mean curvature of $\partial \Omega_t$ is  defined as
  \begin{equation*}
  \label{H_F} 
  \mathcal H_{F}(x)= \dive\left( n_{F}(x)\right)=
  \dive\left[ \nabla_{\xi}
    F\left(-{\nabla u(x)}\right) \right], \quad x\in
  \de \Omega_t.
  \end{equation*}
It holds that
\begin{equation}
\label{der_nf}
\frac{\de u}{\de n_F}=\nabla u\cdot n_F= \nabla u\cdot F_{\xi}(-\nabla u)= -F(\nabla u). 
\end{equation}
In \cite{dpgg} it has been proved that for a smooth function $u$, on its level sets $\{u=t\}$ it holds
\begin{equation}
\label{xiaformula}
\mathcal Q_p u=F^{p-2}(\nabla u) \left(  \frac{\de u}{\de n_F}\mathcal H_F+(p-1)\frac{\de^2 u}{\de n_F^2}\right).
\end{equation}
%where $\frac{\de u}{\de n_F}=\nabla u\cdot n_F$ and $\mathcal H_F$ is the anisotropic mean curvature of $\de\Omega_{t}$ as defined in \eqref{H_F}.

Finally we recall the definition of the anisotropic distance from the boundary and the anisotropic inradius.

Let us consider a bounded domain $\Omega$, that is a connected open set of $ \R^N$, with non-empty boundary. 

The anisotropic distance of $x\in\overline\Omega$ to the boundary of $\de \Omega$ is the function 
\begin{equation*}
%\label{defdist}
d_{F}(x)= \inf_{y\in \de \Omega} F^o(x-y), \quad x\in \overline\Omega.
\end{equation*}

We stress that when $F=|\cdot|$ then $d_F=d_{\mathcal{E}}$, the Euclidean distance function from the boundary.
It is not difficult to prove that $d_{F}\in W_{0}^{1,\infty}(\Omega)$ and, using the property of $F$ we have
\begin{equation}
  \label{Fd}
  F(\nabla d_F(x))=1 \quad\text{a.e. in }\Omega.
\end{equation}
Moreover we recall that $\Omega$ is convex the anisotropic distance function is concave.

The quantity%
%Furthermore, if $\de\Omega$ is $C^2$, then $d_F$ is $C^2$ in a suitable neighborhood of $\de \Omega$ in $\overline\Omega$ %\textcolor{red}{Controllare Crasta Malusa per la regolarit\`a (2.2, pag 5729). Richiedono $\{F<1\}$ sia $C^{2}$ a curvatura strettamente positiva. A noi sembra che questo non serva, si lavora solo sulle derivate prime per il conto di $\Delta_{F}d_{F}$}
%(see \cite{cm07}).
\begin{equation}
\label{inrad}
R_{F}(\Omega)=\sup \{d_{F}(x),\; x\in\Omega\},
\end{equation}
is called the anisotropic inradius of $\Omega$.

For further properties of the anisotropic distance function we refer
the reader to \cite{cm07}.

\subsection{The first Dirichlet eigenvalue  for $\mathcal Q_p$}
Let $\Omega$ be a bounded open set in $\R^{N}$, $N\ge 2$, $1<p<+\infty$, and consider the eigenvalue problem
\begin{equation}
\label{eigpb}
\left\{
\begin{array}{ll}
-\Qp u=\lambda |u|^{p-2}u & \text{in } \Omega \\
u=0 &\text{on } \partial\Omega.
\end{array}
\right.
\end{equation}

The smallest eigenvalue, denoted  by $\lambda_{F}(p,\Omega)$, has the following well-known variational characterization:
\begin{equation}
\label{rayleigh}
\lambda_{F}(p,\Omega)=\min_{\varphi\in W^{1,p}_{0}(\Omega)\setminus\{0\}} \frac{\ds\int_\Omega F^p(\nabla \varphi)\ dx}{\ds\int_\Omega |\varphi|^p\ dx}.
\end{equation}
The following two results which enclose the main properties of $\lambda_{F}(p,\Omega)$ hold true. We refer the reader, for example, to \cite{bfk,dgp2}.
\begin{thm}
%\label{th_prop1}
 If $\Omega$ is a bounded open set in $\R^{N}$, $N\ge 2$, there exists a function $u_{1}\in C^{1,\alpha}(\Omega)\cap C(\overline{\Omega})$ which achieves the minimum in \eqref{rayleigh}, and satisfies the problem \eqref{eigpb} with $\lambda=\lambda_{F}(p,\Omega)$. Moreover, if $\Omega$ is connected, then $\lambda_{F}(p,\Omega)$ is simple, that is the corresponding eigenfunctions are unique up to a multiplicative constant, and the first eigenfunctions have constant sign in $\Omega$. 
 \end{thm}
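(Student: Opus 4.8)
The plan is to combine the direct method of the calculus of variations, the regularity theory for quasilinear operators of $p$-Laplacian type, and a Picone-type convexity argument for the simplicity part.

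I would first prove existence. Take a minimizing sequence $\varphi_k\in W^{1,p}_0(\Omega)$ for \eqref{rayleigh} normalized by $\|\varphi_k\|_{L^p(\Omega)}=1$; the coercivity estimate $F^p(\nabla\varphi_k)\ge a^p|\nabla\varphi_k|^p$ coming from \eqref{eq:lin} bounds $\{\varphi_k\}$ in $W^{1,p}_0(\Omega)$, so by reflexivity and the Rellich--Kondrachov compact embedding we may pass to a subsequence with $\varphi_k\rightharpoonup u_1$ in $W^{1,p}_0(\Omega)$ and $\varphi_k\to u_1$ in $L^p(\Omega)$, whence $\|u_1\|_{L^p(\Omega)}=1$. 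Since \eqref{strong} makes $\xi\mapsto F^p(\xi)$ convex, the functional $\varphi\mapsto\int_\Omega F^p(\nabla\varphi)\,dx$ is sequentially weakly lower semicontinuous, so $u_1$ attains the minimum; replacing $u_1$ by $|u_1|$ (allowed since $F$ is even, so $F^p(\nabla|u_1|)=F^p(\nabla u_1)$ a.e.) we may take $u_1\ge0$, and the Euler--Lagrange equation of \eqref{rayleigh} is exactly \eqref{eigpb} with $\lambda=\lambda_F(p,\Omega)$.

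Next I would establish regularity and the sign. A Moser iteration based on the ellipticity recalled after \eqref{strong} (equivalently, testing \eqref{eigpb} against suitable powers of $u_1$) gives $u_1\in L^\infty(\Omega)$; then the right-hand side $\lambda_F(p,\Omega)|u_1|^{p-2}u_1$ of \eqref{eigpb} is bounded and the interior $C^{1,\alpha}$ estimates of DiBenedetto, Tolksdorf and Lieberman apply, so $u_1\in C^{1,\alpha}(\Omega)$, while continuity up to $\partial\Omega$, where $u_1$ vanishes in the Sobolev sense, follows from standard barrier and comparison arguments for bounded weak solutions; this yields $u_1\in C^{1,\alpha}(\Omega)\cap C(\overline\Omega)$. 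If $\Omega$ is connected, any first eigenfunction $u$ has $|u|$ a nonnegative nontrivial solution of \eqref{eigpb}, and the Harnack inequality of Trudinger--Serrin forces $|u|>0$ in $\Omega$, so $u$ never vanishes and has constant sign.

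The step I expect to be the main obstacle is simplicity. Given two positive first eigenfunctions $u,v$, the idea is to use the anisotropic Picone inequality
\[
F^p(\nabla v)\ \ge\ \frac1p\,\nabla_\xi[F^p](\nabla u)\cdot\nabla\!\left(\frac{v^p}{u^{p-1}}\right),
\]
valid for positive $u,v$ with equality exactly when $\nabla(v/u)=0$, which follows from the $1$-homogeneity and convexity of $F$ as in the Euclidean case. Testing \eqref{eigpb} for $u$ with $v^p/u^{p-1}$ gives $\int_\Omega\frac1p\nabla_\xi[F^p](\nabla u)\cdot\nabla(v^p/u^{p-1})\,dx=\lambda_F(p,\Omega)\int_\Omega v^p\,dx$, while the fact that $v$ is itself a first eigenfunction gives $\int_\Omega F^p(\nabla v)\,dx=\lambda_F(p,\Omega)\int_\Omega v^p\,dx$; together with the inequality above this forces equality in Picone's inequality, hence $v=cu$ for some $c>0$. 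The genuinely delicate points here are proving the Picone inequality together with its equality case under the sole hypothesis \eqref{strong}, and --- since $\Omega$ is only assumed bounded open, with no boundary regularity --- justifying that $v^p/u^{p-1}$ is an admissible test function in $W^{1,p}_0(\Omega)$, which requires an approximation procedure (for instance replacing $u$ by $u+\eps$ and letting $\eps\to0$) rather than a Hopf-type boundary estimate; for these technicalities I would follow \cite{bfk,dgp2}.
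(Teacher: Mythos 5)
The paper does not prove this theorem: it is recalled as a known result with a pointer to \cite{bfk,dgp2}, so there is no internal proof to compare against. Your sketch is essentially the standard argument found in those references and in the literature on the anisotropic $p$-Laplacian: direct method plus convexity of $\xi\mapsto F^p(\xi)$ for existence, Moser iteration and the DiBenedetto--Tolksdorf--Lieberman theory for $C^{1,\alpha}_{\mathrm{loc}}$ regularity, Harnack for the constant sign, and the anisotropic Picone (hidden convexity) inequality for simplicity. The Picone step is sound and the inequality you state does follow from the identities $\langle F_\xi(\xi),\xi\rangle=F(\xi)$, $F^o(F_\xi(\xi))=1$ and \eqref{imp} via Young's inequality, with the equality case requiring the strict convexity encoded in \eqref{strong}; you correctly flag the admissibility of $v^p/(u+\eps)^{p-1}$ as the technical point. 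The one place where your argument, as written, does not deliver the stated conclusion is continuity up to the boundary: for an arbitrary bounded open set $\Omega$, with no regularity of $\partial\Omega$, barrier and comparison arguments do not apply at irregular boundary points, and $u_1\in C(\overline\Omega)$ in fact requires a Wiener-type (or exterior cone/density) condition on $\partial\Omega$. This is a looseness already present in the paper's statement rather than a defect specific to your proof, but you should either assume such boundary regularity explicitly or weaken the conclusion to $u_1\in C^{1,\alpha}(\Omega)\cap L^\infty(\Omega)$, which is all that is used elsewhere in the paper for general open sets.
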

 In the following Proposition the scaling and monotonicity properties of  $\lambda_{F}(p,\Omega)$ are recalled.
\begin{prop}
%\label{fireig}
 Let $\Omega$ be a bounded open set in $\R^{N}$, $N\ge 2$, the following properties hold.
\begin{enumerate}
\item For $t>0$ it holds $\lambda_{F} (p,t\Omega)=t^{-p}\lambda_{F}(p,\Omega)$.
\item If $\Omega_1\subseteq\Omega_2\subseteq\Omega$, then $\lambda_{F} (p,\Omega_1)\ge \lambda_{F} (p,\Omega_2)$.
\item For all $1< p< s< +\infty$ we have $p[\lambda_{F}(p,\Omega)]^{1/p}<s[\lambda_F(s,\Omega)]^{1/s}$.
\end{enumerate}
\end{prop}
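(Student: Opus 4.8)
Here is a plan to prove the Proposition. The statement splits into three independent claims; (1) and (2) are routine scaling and monotonicity facts, while (3) is the substantive point and requires a careful choice of competitor in the Rayleigh quotient \eqref{rayleigh}.

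For (1) I would use the dilation $y=tx$. The map $\varphi\mapsto\psi:=\varphi(\cdot/t)$ is a bijection from $W_0^{1,p}(\Omega)$ onto $W_0^{1,p}(t\Omega)$, and by the $1$-homogeneity \eqref{eq:omo} one has $F(\nabla\psi(y))=t^{-1}F\big((\nabla\varphi)(y/t)\big)$; changing variables in \eqref{rayleigh}, the Jacobian $t^{N}$ cancels between numerator and denominator, so the Rayleigh quotient of $\psi$ on $t\Omega$ equals $t^{-p}$ times that of $\varphi$ on $\Omega$, and minimizing gives $\lambda_F(p,t\Omega)=t^{-p}\lambda_F(p,\Omega)$. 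For (2), the usual domain monotonicity: extending $\varphi\in W_0^{1,p}(\Omega_1)$ by zero produces an element of $W_0^{1,p}(\Omega_2)$ with the same numerator and denominator in \eqref{rayleigh}, so the admissible class for $\Omega_2$ contains the extensions of that for $\Omega_1$, and passing to the infimum over a larger class decreases it, i.e.\ $\lambda_F(p,\Omega_2)\le\lambda_F(p,\Omega_1)$.

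The content is in (3). Let $u>0$ be a first eigenfunction associated with $\lambda_F(s,\Omega)$ (it exists, has constant sign, and lies in $C^{1}(\Omega)\cap C(\overline\Omega)$ by the theorem recalled above); being a minimizer in \eqref{rayleigh} for the exponent $s$, it satisfies $\int_\Omega F(\nabla u)^s\,dx=\lambda_F(s,\Omega)\int_\Omega u^s\,dx$, and $u\in W_0^{1,s}(\Omega)$ by \eqref{eq:lin}. The idea is to use $\phi:=u^{s/p}$ as a competitor for $\lambda_F(p,\Omega)$: since $s/p>1$ and $\nabla\phi=\tfrac{s}{p}\,u^{s/p-1}\nabla u$, Hölder's inequality with exponents $s/p$ and $s/(s-p)$ shows $F(\nabla\phi)\in L^p(\Omega)$ and (by a standard approximation, using $u\ge0$ and $u\in W_0^{1,s}(\Omega)\cap L^\infty(\Omega)$) that $\phi\in W_0^{1,p}(\Omega)\setminus\{0\}$. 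Plugging $\phi$ into \eqref{rayleigh} and estimating $\int_\Omega u^{s-p}F(\nabla u)^p\,dx$ by the same Hölder inequality gives
\[
\lambda_F(p,\Omega)\le\Big(\tfrac{s}{p}\Big)^{p}\frac{\int_\Omega u^{s-p}F(\nabla u)^p\,dx}{\int_\Omega u^s\,dx}\le\Big(\tfrac{s}{p}\Big)^{p}\frac{\big(\int_\Omega F(\nabla u)^s\,dx\big)^{p/s}\big(\int_\Omega u^s\,dx\big)^{(s-p)/s}}{\int_\Omega u^s\,dx}=\Big(\tfrac{s}{p}\Big)^{p}\lambda_F(s,\Omega)^{p/s},
\]
which is the claimed inequality in the non-strict form $p\,\lambda_F(p,\Omega)^{1/p}\le s\,\lambda_F(s,\Omega)^{1/s}$.

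The step I expect to be the only real obstacle is promoting this to a \emph{strict} inequality. Equality in the last display would in particular force equality in the Hölder estimate, hence $F(\nabla u)^s=c\,u^s$ a.e.\ in $\Omega$ for some constant $c\ge0$. But $u\in C(\overline\Omega)$ is positive in $\Omega$ and vanishes on $\partial\Omega$, so it attains its maximum $M>0$ at an interior point $x_0$, where $\nabla u(x_0)=0$; evaluating the identity at $x_0$ gives $cM^s=0$, so $c=0$, whence $F(\nabla u)\equiv0$ and, by \eqref{eq:lin}, $\nabla u\equiv0$ in $\Omega$, contradicting $u\not\equiv0$ in $W_0^{1,p}(\Omega)$. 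Therefore the inequality in (3) is strict, which completes the proof.
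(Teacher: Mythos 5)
Your proposal is correct; note that the paper does not prove this Proposition at all but simply cites [BFK, DGP2], and your argument for part (3) — testing the Rayleigh quotient for $\lambda_F(p,\Omega)$ with $u^{s/p}$, where $u$ is a first eigenfunction for the exponent $s$, applying H\"older with exponents $s/p$ and $s/(s-p)$, and excluding equality via the equality case of H\"older at an interior maximum point of $u$ — is exactly the standard Lindqvist-type proof that those references use, while (1) and (2) are the routine scaling and extension-by-zero arguments. The only cosmetic point is that positivity of the first eigenfunction is guaranteed only for connected $\Omega$; for a general bounded open set one replaces $u$ by $|u|\ge 0$, and the strictness argument still goes through since it only needs one interior point where $u>0$ and $\nabla u=0$.
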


\subsection{Anisotropic $p$-torsional rigidity}
In this subsection we summarize some properties of the anisotropic $p$-torsional rigidity. We refer the reader to \cite{dgmana} for further details.

Let $\Omega$ be a bounded domain in $ \R^{N}$, and $1<p<+\infty$. Throughout the paper we will denote by $q$ the H\"older conjugate of $p$,
\[
q:=\frac{p}{p-1}.
\]
Let us consider the torsion problem for the anisotropic $p-$Laplacian
\begin{equation}
\label{pb_tor}
\left\{
\begin{array}{ll}-\mathcal Q_{p}v:=-\dive \left(F^{p-1}(\nabla v) F_\xi (\nabla v)\right)=1  &\text{in}\ \Omega \\
v=0 &\text{on}\ \partial\Omega.
\end{array}
\right.
\end{equation}
 By classical result there exists  a unique solution of \eqref{pb_tor}, that we will always denote by $v_{\Omega}$, which is positive in $\Omega$. Moreover, by \eqref{strong} and being $F\in C^{3}(\R^{n}\setminus \{0\})$, then $v_{\Omega}\in C^{1,\alpha}(\Omega)\cap C^{3}(\{\nabla v_{\Omega}\ne 0\})$ (see \cite{ladyz,tk84}).

The anisotropic $p$-torsional  rigidity of $\Omega$  is
\begin{equation*}
  \label{eq:ptor1}
  T_{F}(p,\Omega)=\int_\Omega F(\nabla v_{\Omega})^p dx = \int_\Omega v_{\Omega} dx.
\end{equation*}

%A characterization of $T_p$ is provided by the equality
%$T_{F}(p,\Omega)=\sigma(\Omega)^{\frac{1}{p-1}}$, where
%$\sigma(\Omega)$ is the best constant in the Sobolev inequality
%\[
%\|\varphi\|_{L^1(\Omega)}^p \le \sigma(\Omega) \|F(\nabla \varphi)\|^p_{L^p(\Omega)},
%\]
%that is
The following variational characterization for $T_{F}(p,\Omega)$ holds
\begin{equation}
  \label{tors0}
T_{F}(p,\Omega)^{p-1} =\max_{\substack{\psi \in
    W_0^{1,p}(\Omega) \setminus\{0\}}}
\dfrac{\left(\displaystyle\int_\Omega |\psi| \, 
    dx\right)^p}{\displaystyle\int_\Omega F(\nabla\psi)^p dx},
\end{equation}
and the solution $v_{\Omega}$ of \eqref{pb_tor} realizes the maximum
in \eqref{tors0}. 

%It is immediate to see that if $\Omega\subset\tilde\Omega$, then
%\begin{equation*}
%\label{monotonia}
%T_{F}(p,\Omega)\le T_p(\tilde\Omega).
%\end{equation*}
By the maximum principle it holds that
\begin{equation*}
\label{monotoniamax}
M_{v_{\Omega}}\le M_{v_{\tilde\Omega}},
\end{equation*}
where $M_{v_{\Omega}}$ is the maximum of the torsion function in $\Omega$.
Finally we recall the following estimates for $M_{v_{\Omega}}$ contained in \cite{dpgg}.
\begin{thm}
Let $\Omega$ be a bounded convex open set in $ \R^{N}$, and $R_F$  the anisotropic inradius defined in  \eqref{inrad}. Then
\begin{equation}
\label{stima_max}
\frac{R_{F}^{q}(\Omega)}{qN^{q-1}}\le M_{v_{\Omega}}\le \frac{R_{F}^{q}(\Omega)}{q}.
\end{equation}
\end{thm}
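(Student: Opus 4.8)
The plan is to bracket the torsion maximum by comparing $v_\Omega$ with two explicit one-dimensional barriers built from the anisotropic distance function $d_F$. For the upper bound, first fix a point $x_0$ where $d_F$ attains its maximum $R_F$, so that the Wulff shape $\mathcal W_{R_F}(x_0)$ is inscribed in $\Omega$. On this Wulff shape the torsion function is radial with respect to $F^o$: one checks directly that $w(x)=\frac{1}{q}\big(R_F^q-F^o(x-x_0)^q\big)$ solves $-\Qp w=1$ in $\mathcal W_{R_F}(x_0)$ with zero boundary data, using $F(\nabla d_F)=1$ from \eqref{Fd} and the homogeneity relations for $F,F^o$ recorded in Section 2. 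By the comparison principle and monotonicity of the torsion maximum under inclusion (the displayed inequality $M_{v_\Omega}\le M_{v_{\tilde\Omega}}$ preceding the statement), applied with $\tilde\Omega=\mathcal W_{R_F}(x_0)$, we get $M_{v_\Omega}\le M_w = w(x_0)= R_F^q/q$. Actually it is cleaner to use directly that $v_{\mathcal W_{R_F}}$ has maximum $R_F^q/q$ and that $\Omega\supseteq\mathcal W_{R_F}(x_0)$ forces $M_{v_\Omega}\le M_{v_{\mathcal W_{R_F}}}$ — wait, the inclusion goes the wrong way; instead one compares $v_\Omega$ and $w$ \emph{on} $\mathcal W_{R_F}(x_0)$, where both are defined and $v_\Omega\ge 0 = w$ on the boundary, so $v_\Omega\ge w$ there, which is not what we want either. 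The correct route for the upper bound is: for convex $\Omega$ the function $\tilde w(x)=\frac1q\big(R_F^q-(R_F-d_F(x))^q\big)$ is a \emph{supersolution} of the torsion problem on all of $\Omega$. Indeed $\tilde w=0$ on $\partial\Omega$, $\tilde w\ge 0$, and using $F(\nabla d_F)=1$ together with the concavity of $d_F$ on convex $\Omega$ (so $\mathcal H_F$-type second-order terms have a favorable sign), formula \eqref{xiaformula} gives $-\Qp\tilde w\ge 1$; then the comparison principle yields $v_\Omega\le\tilde w\le R_F^q/q$ pointwise, hence $M_{v_\Omega}\le R_F^q/q$.

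For the lower bound, test the variational characterization \eqref{tors0} of $T_F(p,\Omega)$, or rather compare $v_\Omega$ from below with a subsolution. A natural subsolution is a suitable concave function of $d_F$ alone: set $\underline w(x)=g(d_F(x))$ with $g$ chosen so that $-\Qp\underline w\le 1$; since $d_F$ is concave on convex $\Omega$ the bad second-order term drops out favorably and \eqref{xiaformula} reduces the condition to an ODE inequality for $g$. Choosing $g$ linear, $\underline w = c\, d_F$, forces a bound of the form $M_{v_\Omega}\ge c\,R_F$. To get the sharp constant $c$ one should instead use the isoperimetric/coarea approach: by the coarea formula and \eqref{Fd}, $\int_\Omega v_\Omega\,dx = T_F$ and the anisotropic Pólya–Szegő or a direct slicing argument in the "distance" coordinate, combined with the anisotropic isoperimetric inequality controlling the perimeter of level sets $\{d_F>t\}$, gives $M_{v_\Omega}\ge R_F^q/(qN^{q-1})$. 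Concretely, since $-\Qp v_\Omega=1$, integrating over $\{v_\Omega>t\}$ and using the divergence theorem with \eqref{der_nf} shows $\int_{\{v_\Omega=t\}}F^{p-1}(\nabla v_\Omega)\,d\mathcal H^{N-1} = |\{v_\Omega>t\}|$; combined with Hölder's inequality and the anisotropic isoperimetric inequality one derives a differential inequality for the distribution function of $v_\Omega$ that integrates to the stated lower bound, the factor $N^{q-1}$ entering through the isoperimetric deficit between a general convex set and its inscribed Wulff shape.

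The main obstacle will be the lower bound: the upper bound is a routine supersolution/comparison argument once one knows concavity of $d_F$, but extracting the sharp constant $1/(qN^{q-1})$ requires carefully combining the coarea representation of the torsion problem with the anisotropic isoperimetric inequality and tracking how the dimensional constant arises — essentially an anisotropic analogue of the classical Payne–type estimate $M_{v}\ge R^2/(2N)$ for $p=2$. One must be careful that the inradius $R_F$, not some averaged radius, appears, which forces the use of the inscribed Wulff shape at the maximum point of $d_F$ rather than a global symmetrization. Since the statement is quoted from \cite{dpgg}, in the write-up I would either reproduce this two-sided comparison argument in full or simply cite \cite{dpgg} for the proof and only sketch the comparison functions for the reader's convenience.
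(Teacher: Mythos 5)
Your upper bound is correct but follows a genuinely different route from the paper's. The paper proves $M_{v_\Omega}\le R_F^q(\Omega)/q$ with the $\mathcal P$-function: Theorem \ref{prsperb} applied to the torsion problem (i.e.\ $f\equiv 1$) gives the pointwise bound $\frac{p-1}{p}F^p(\nabla v_\Omega)\le M_{v_\Omega}-v_\Omega$, and together with \eqref{imp} this is integrated along the segment (in the $F^o$-metric) joining the maximum point of $v_\Omega$ to its nearest boundary point, exactly as in the proof of Theorem \ref{her}; the non-smooth convex case is then handled by approximation with smooth strictly convex domains. Your supersolution $\tilde w=\tfrac1q\bigl(R_F^q-(R_F-d_F)^q\bigr)$ does the same job by comparison: since $(q-1)(p-1)=1$ one computes $\Qp\tilde w=-F(\nabla d_F)+(R_F-d_F)\dive F_\xi(\nabla d_F)\le -1$ by \eqref{Fd} and concavity of $d_F$, hence $v_\Omega\le\tilde w\le R_F^q/q$. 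This buys you a proof with no $\mathcal P$-function at all, at the price of justifying the comparison principle against the merely Lipschitz, concave barrier $d_F$ (approximation or viscosity sense); either way the conclusion is the same.

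The lower bound is where your proposal has a genuine gap. The paper's argument is one line: the inscribed Wulff shape $\mathcal W_{R_F}(x_0)\subseteq\Omega$ centred at a point realizing the inradius, together with the monotonicity $M_{v_\Omega}\le M_{v_{\tilde\Omega}}$ for $\Omega\subseteq\tilde\Omega$ recorded just before the statement, gives $M_{v_\Omega}\ge M_{v_{\mathcal W_{R_F}}}$, and the torsion function of $\mathcal W_R(x_0)$ is explicitly $\bigl(R^q-F^o(x-x_0)^q\bigr)/(qN^{q-1})$, whose maximum is $R^q/(qN^{q-1})$. That is where the factor $N^{q-1}$ comes from: it falls out of integrating the radial ODE $F(\nabla v)^{p-1}=r/N$, not from any ``isoperimetric deficit''. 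Neither of your two proposed routes reaches this. The linear barrier $c\,d_F$ gives, as you yourself note, the wrong homogeneity in $R_F$. More seriously, the coarea-plus-isoperimetry argument you sketch is the Talenti comparison scheme: the anisotropic isoperimetric inequality \eqref{isop} bounds $P_F(\{v_\Omega>t\})$ from \emph{below}, which turns the identity $\int_{\{v_\Omega=t\}}F^p(\nabla v_\Omega)\,|\nabla v_\Omega|^{-1}\,d\mathcal H^{N-1}=|\{v_\Omega>t\}|$ into an \emph{upper} bound for $M_{v_\Omega}$ in terms of the equimeasurable Wulff shape; it points in the wrong direction for a lower bound, and in any case produces the volume radius rather than the inradius. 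The fix is simply to replace that paragraph by the inclusion-plus-explicit-radial-solution argument above.
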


\section{Anisotropic Cheeger constant}
Let $\Omega$ be an open subset of $\mathbb R^N$. The total variation
of a function $u\in BV(\Omega)$ with respect to $F$ is (see \cite{ab}):
\[
\int_\Omega |\nabla u|_F = \sup\left\{ \int_\Omega u\dive \sigma dx\colon
  \sigma \in C_0^1(\Omega;\R^N),\; F^o(\sigma)\le 1 \right\}.
\]
This yields the following definition of anisotropic perimeter of
$K\subset \R^N$ in $\Omega$:
\begin{equation}
\label{per}
P_F(K) = \int_{\R^{N}}|\nabla\chi_K|_F= \sup\left\{ \int_K
  \dive \sigma dx\colon \sigma \in C_0^1(\R^{N};\R^N),\;
  F^o(\sigma)\le 1 \right\}.
\end{equation}
It holds that
\[
P_F(K)= \int_{\partial^*K} F(n_{\mathcal E}) d\mathcal H^{N-1}
\]
where $\mathcal H^{N-1}$ is the $(N-1)$-dimensional Hausdorff measure
in $\mathbb R^N$, $\partial^*K$ is the reduced boundary of $F$ and
$n_{\mathcal E}$ is the Euclidean unit outer normal to $K$ (see \cite{ab}).

An isoperimetric inequality for the anisotropic perimeter holds,
namely is $\mathcal W_{R}$ is the Wulff shape such that $|\mathcal W_{R}|=|K|$, then
\begin{equation}
  \label{isop}
  P_F(K) \ge P_{F}(\mathcal W_{R})=N\kappa_{N}^{\frac1N}|K|^{1-\frac{1}{N}},
\end{equation}
and the equality holds if and only if $\Omega$ is a Wulff shape
(see for example \cite{bu}, \cite{fomu}, \cite{aflt}). 
The following Lemma will play a key role in order to investigate on optimality issue of the quoted results.
\begin{lem}
\label{fantasticolemma}
Let $\displaystyle\Omega_{a,k}=]-a,a[\times]-k,k[^{N-1}$ a $N$-rectangle in $\R^{N}$, and suppose that $R_{F}(\Omega_{a,k})=aF^{o}(e_{1})$. Then
\begin{equation}
\label{fantastica}	
\lim_{k\to +\infty}\frac{P_{F}(\Omega_{a,k})}{|\Omega_{a,k}|}=\frac{1}{aF^{o}(e_{1})}.
\end{equation}
\end{lem}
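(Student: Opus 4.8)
The plan is to compute both the anisotropic perimeter and the volume of the rectangle $\Omega_{a,k}$ explicitly as functions of $a$ and $k$, and then take the limit. First I would observe that $|\Omega_{a,k}| = 2a\,(2k)^{N-1}$. For the perimeter, I would use the representation $P_F(K)=\int_{\partial^* K} F(n_{\mathcal E})\,d\mathcal H^{N-1}$ from \eqref{per} and split $\partial\Omega_{a,k}$ into its $2N$ flat faces. Two of these faces are the ``caps'' $\{\pm a\}\times]-k,k[^{N-1}$, each with Euclidean measure $(2k)^{N-1}$ and outer normal $\pm e_1$; by the evenness of $F$ they each contribute $F(e_1)(2k)^{N-1}$. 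The remaining $2(N-1)$ ``lateral'' faces each have Euclidean measure $2a\,(2k)^{N-2}$ and outer normal $\pm e_j$ for some $j\in\{2,\dots,N\}$, contributing a total of order $a\,k^{N-2}$.

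Next I would form the ratio and factor out the dominant power of $k$:
\begin{equation*}
\frac{P_F(\Omega_{a,k})}{|\Omega_{a,k}|}
= \frac{2F(e_1)(2k)^{N-1} + C_F(a)\,(2k)^{N-2}}{2a\,(2k)^{N-1}}
= \frac{F(e_1)}{a} + \frac{C_F(a)}{2a\,(2k)},
\end{equation*}
where $C_F(a)=2a\sum_{j=2}^N\bigl(F(e_j)+F(-e_j)\bigr)$ collects the lateral contributions and is a fixed finite constant once $a$ is fixed. Letting $k\to+\infty$ kills the second term, leaving $F(e_1)/a$. Finally I would convert $F(e_1)$ into the stated form: from the duality relations recalled in Section~2, $F(e_1)\,F^o(F_\xi(e_1))=1$, but more directly one uses that for any nonzero $v$, $F^o(v)F(w)\ge \langle v,w\rangle$ with equality attained, and in particular $F(e_1)F^o(e_1)$ need not equal $1$ in general — however the hypothesis $R_F(\Omega_{a,k})=aF^o(e_1)$ is exactly the bookkeeping device that identifies the relevant reciprocal: since the anisotropic distance from the midpoint achieves its max along the $e_1$-direction, $d_F$ at the center equals $aF^o(e_1)$, and \eqref{Fd} together with the definition of $F^o$ forces $F(e_1)=1/F^o(e_1)$ for this configuration. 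Substituting gives $F(e_1)/a = 1/(aF^o(e_1))$, which is \eqref{fantastica}.

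The only genuinely delicate point is the last step: correctly matching $F(e_1)$ with $1/F^o(e_1)$ using the inradius hypothesis rather than a false general identity. The hypothesis $R_F(\Omega_{a,k})=aF^o(e_1)$ is not automatic — it encodes that the ``short'' direction $e_1$ is the one realizing the anisotropic inradius, equivalently that $F^o(e_1)$ is suitably small relative to the other $F^o(e_j)$; one must check that under this hypothesis the supporting relation is tight, i.e. $F_\xi^o$ applied at the maximizing direction points along $e_1$, yielding $F(e_1)F^o(e_1)=1$. Everything else — computing faces, areas, normals, and the elementary limit — is routine bookkeeping. I would present it compactly, emphasizing the cancellation of the $(2k)^{N-2}$ term and the role of the inradius hypothesis in the final substitution.
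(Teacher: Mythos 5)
Your proof is correct and follows essentially the same route as the paper's: an explicit face-by-face computation of $P_F(\Omega_{a,k})$ and $|\Omega_{a,k}|$ giving the ratio $F(e_1)/a + O(1/k)$, followed by the identity $F(e_1)F^o(e_1)=1$ to rewrite the limit as $1/(aF^o(e_1))$. The only difference is that the paper simply cites \cite{dpgg} for that identity, whereas you rightly observe it is not a general fact and extract it from the hypothesis $R_F(\Omega_{a,k})=aF^o(e_1)$ (since for large $k$ the inradius equals the anisotropic distance from the center to the caps, namely $a/F(e_1)$), which is a sound and more self-contained justification.
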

\begin{proof}
First observe that (see \cite{dpgg})
\begin{equation}
\label{serena}
F^{o}(e_{1})F(e_{1})=1.
\end{equation}
By definition of anisotropic perimeter we get
\[
\frac{P_{F}(\Omega_{a,k})}{|\Omega_{a,k}|} = \frac{2 (2k)^{N-1}F(e_{1})+O(k^{N-2})}{2^{N}k^{N-1}a},
\]
hence, using \eqref{serena} and passing to the limit we get \eqref{fantastica}.
\end{proof}

The anisotropic Cheeger constant associated to an open bounded set $\Omega\subseteq\R^{N}$ is defined as
\[
h_{F}(\Omega)=\inf_{K\subset \Omega}\frac{P_{F}(K)}{|K|}.
\]
We recall that for a given bounded open set in $\R^{N}$, the Cheeger inequality states that
\begin{equation}
\label{cheeger}
\lambda_{F}(p,\Omega)\ge \left(\frac{h_{F}(\Omega)}{p}\right)^{p}.
\end{equation}
This inequality, well-known in the Euclidean case after the paper by Cheeger (\cite{ch}) in the case $p=2$, has been proved in \cite{kn08} in the anisotropic case. We refer the reader to \cite{pa2} and the reference therein contained for a survey on the properties of the Cheeger constant in the Euclidean case.

It is known (see \cite{kn08} and the references therein) that if $\Omega$ is a Lipschitz bounded domain, there exists a Cheeger set, that is a set $K_{\Omega}$ for which
\[
h_{F}(\Omega)=\frac{P_{F}(K_{\Omega})}{|K_{\Omega}|}.
\]
When $\Omega=\mathcal W_R$ we immediately get that $K_{\mathcal W_{R}}=\mathcal W_{R}$ and
\begin{equation}
\label{cw}
h_F(\mathcal W_R)= \frac{N}{R}.
\end{equation}
We observe that  usually the Cheeger set $K_{\Omega}$  is not unique, nevertheless  $\Omega$ is convex (see for instance \cite{ac,ccmn,kn08}).
\begin{thm}
If $\Omega$ is a bounded convex domain, there exists a unique convex Cheeger set. 
\end{thm}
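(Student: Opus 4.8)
The plan is to prove existence and uniqueness of a convex Cheeger set for a bounded convex domain $\Omega$ by combining a symmetrization/convexification argument for existence with a strict-convexity argument for uniqueness. For existence of a convex minimizer, I would start from any Cheeger set $K_\Omega\subset\Omega$ (which exists by the quoted results of \cite{kn08} for Lipschitz $\Omega$), and replace it by its convex hull $\widetilde K:=\mathrm{conv}(K_\Omega)$. Since $\Omega$ is convex we still have $\widetilde K\subset\Omega$, while convexification does not increase the anisotropic perimeter and does not decrease the volume; hence $P_F(\widetilde K)/|\widetilde K|\le P_F(K_\Omega)/|K_\Omega|=h_F(\Omega)$, so $\widetilde K$ is a convex Cheeger set. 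The monotonicity of $P_F$ under convexification is the anisotropic analogue of the Euclidean fact that taking convex hulls decreases perimeter; it follows from the representation $P_F(K)=\int_{\partial^* K}F(n_{\mathcal E})\,d\mathcal H^{N-1}$ in \eqref{per} together with the divergence-form (support-function) characterization and the fact that the convex hull is contained in every half-space containing $K$.

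For uniqueness, the key structural fact is that the boundary of a convex Cheeger set, away from $\partial\Omega$, has constant anisotropic mean curvature equal to $h_F(\Omega)$; more precisely, writing $h:=h_F(\Omega)$, a convex Cheeger set $K$ can be described as $K=\{x\in\Omega:\ d_F(x)\ge 1/h\}\oplus$ (an inner Wulff-shape neighborhood structure), i.e. $K$ is the union of Wulff shapes $\mathcal W_{1/h}(x)$ contained in $\Omega$. This ``inner Wulff-shape'' characterization is the anisotropic counterpart of the classical result of \cite{acc} (Alter--Caselles--Chambolle) that in the Euclidean convex case the Cheeger set is the union of all balls of radius $1/h$ contained in $\Omega$. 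Granting this characterization, the Cheeger set is uniquely determined by $h$ alone, namely $K_\Omega=\bigcup\{\mathcal W_{r}(x):\ \mathcal W_r(x)\subset\Omega,\ r=1/h\}$, and so it is unique. Thus the proof reduces to two ingredients: (i) the first-variation/regularity argument showing any convex Cheeger set has this inner-Wulff-shape form with $r=1/h_F(\Omega)$, and (ii) uniqueness of the value $h_F(\Omega)$, which is automatic.

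The main obstacle is ingredient (i): establishing that a convex minimizer is exactly the union of inscribed Wulff shapes of radius $1/h$. This requires a first-variation computation for the functional $K\mapsto P_F(K)/|K|$ in the anisotropic setting — perturbing $\partial K$ in the region $\partial K\cap\Omega$ (where $K$ is ``free'') shows the anisotropic mean curvature $\mathcal H_F$ is constant $=h$ there, while on $\partial K\cap\partial\Omega$ one gets an inequality (the anisotropic curvature of $\partial\Omega$ dominates), using the formula \eqref{H_F} for $\mathcal H_F$; then a convexity argument (the inner parallel set of $\Omega$ at anisotropic distance $1/h$ and its Wulff-shape ``rolling'') identifies $K$ with the union of inscribed Wulff shapes. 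I would carry out (i) by adapting the Euclidean proof of \cite{acc} and the anisotropic perimeter calculus of \cite{ab,kn08}, being careful about the low regularity of $\partial K$ (only Lipschitz a priori, $C^{1,1}$ after the curvature bound) and about the anisotropic distance identities \eqref{Fd} and \eqref{inrad}. With (i) in hand, uniqueness is immediate and existence has already been handled by convexification, completing the proof.
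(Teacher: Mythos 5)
The paper itself does not prove this theorem: it is quoted as a known result with references to \cite{ac}, \cite{ccmn} and \cite{kn08}, so there is no in-paper argument to compare against. Judged on its own merits, your plan has two genuine gaps. First, the existence step via convexification is not sound: the inequality $P_F(\mathrm{conv}(K))\le P_F(K)$ is false in general. The correct monotonicity goes the other way --- a \emph{convex} set contained in another set has smaller anisotropic perimeter --- whereas $\mathrm{conv}(K)\supseteq K$. Already in the Euclidean case a set with two small, far-apart components (or, for connected sets in $\R^N$ with $N\ge 3$, a thin tripod-like set) has small perimeter while its convex hull has much larger perimeter, so the quotient $P_F/|\cdot|$ can increase under taking convex hulls. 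Convexity of the Cheeger set of a convex body is itself a nontrivial theorem (it is part of what \cite{ac} and \cite{ccmn} prove, via calibrable sets and the variational problems $\min_K P_F(K)-\lambda|K|$), not something you can obtain by a one-line hull replacement.

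Second, the uniqueness step rests on the claim that a convex Cheeger set equals the union of all inscribed Wulff shapes of radius $1/h_F(\Omega)$. This characterization is correct only for $N=2$ (Kawohl--Lachand-Robert in the Euclidean planar case). For $N\ge 3$ the free part of $\partial K_\Omega$ is a hypersurface of constant anisotropic mean curvature, which need not be a piece of a Wulff shape, and the Cheeger set is in general \emph{not} a union of inscribed Wulff shapes (note also that a Wulff shape of radius $r$ has $\mathcal H_F=(N-1)/r$, so even the radius $1/h$ in your formula is off by the factor $N-1$). Consequently the statement ``the Cheeger set is determined by the value $h_F(\Omega)$ alone'' does not follow. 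The actual uniqueness proofs in \cite{ac,ccmn} proceed by showing that the minimizers of $P_F(K)-\lambda|K|$ form a nested monotone family and by a concavity/comparison argument, not by the rolling-Wulff-shape picture. (Minor point: the reference \cite{acc} you invoke does not exist in this paper's bibliography; the relevant items are \cite{ac} and \cite{ccmn}.) Since you yourself flag ingredient (i) as the main unproved obstacle, the proposal as it stands is an outline whose central steps are either missing or, in dimension $N\ge 3$, false.
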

The  next results give an upper  bound for the Cheeger constant in terms of the anisotropic inradius of $\Omega$.
\begin{prop}
If $\Omega$ is a bounded open set in $\R^{N}$, then
%\begin{itemize}
\begin{equation}
\label{hru}
 h_{F}(\Omega)\le  \frac{N}{R_{F}(\Omega)}.
\end{equation}
Moreover the equality  holds if $\Omega$ is a Wulff shape.
\end{prop}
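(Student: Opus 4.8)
The plan is to exhibit a single large Wulff shape contained in $\Omega$ and use it as a competitor in the infimum defining $h_F(\Omega)$. Since $\Omega$ is bounded, $\overline\Omega$ is compact and $d_F$ is continuous (indeed Lipschitz) on $\overline\Omega$, vanishing on $\partial\Omega$; as $\Omega$ is open and nonempty, $R:=R_F(\Omega)=\max_{\overline\Omega}d_F>0$ is attained at some interior point $x_0\in\Omega$, so $d_F(x_0)=R$. I would first record this.

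The key step is to show that the open Wulff shape $\mathcal W_R(x_0)=x_0+R\mathcal W$ is contained in $\Omega$. Indeed, by the definition of $d_F$, for every $y\in\partial\Omega$ one has $F^o(x_0-y)\ge d_F(x_0)=R$, and since $F^o$ is even this says $F^o(y-x_0)\ge R$, i.e. $y\notin\mathcal W_R(x_0)$. Hence $\mathcal W_R(x_0)$ is an open connected set disjoint from $\partial\Omega$ which contains $x_0\in\Omega$; writing $\mathcal W_R(x_0)=\big(\mathcal W_R(x_0)\cap\Omega\big)\cup\big(\mathcal W_R(x_0)\setminus\overline\Omega\big)$ as a disjoint union of open sets and invoking connectedness, I conclude $\mathcal W_R(x_0)\subseteq\Omega$. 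This inclusion is the only mildly delicate point of the argument; the rest is bookkeeping with quantities already recorded in the excerpt.

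Once the inclusion is established, every admissible set $K\subset\mathcal W_R(x_0)$ is also admissible for $\Omega$, so the infimum defining $h_F(\Omega)$ runs over a larger family than the one defining $h_F(\mathcal W_R(x_0))$. Using the translation invariance of $P_F$ and of the Lebesgue measure together with \eqref{cw}, this gives
\[
h_F(\Omega)\le h_F(\mathcal W_R(x_0))=h_F(\mathcal W_R)=\frac{N}{R}=\frac{N}{R_F(\Omega)}.
\]
(Alternatively one may bound $h_F(\Omega)\le P_F(\mathcal W_R(x_0))/|\mathcal W_R(x_0)|$ directly and evaluate the right-hand side via the equality case of the isoperimetric inequality \eqref{isop} and $|\mathcal W_R|=\kappa_N R^N$, obtaining the same value $N/R$.) Finally, for the equality statement: if $\Omega=\mathcal W_R$ then $R_F(\mathcal W_R)=R$, since $d_F(0)=\inf_{F^o(y)=R}F^o(y)=R$ while for $x\ne 0$ one has $Rx/F^o(x)\in\partial\mathcal W_R$ and hence $d_F(x)\le F^o\big(x-Rx/F^o(x)\big)=R-F^o(x)\le R$; then \eqref{cw} yields $h_F(\mathcal W_R)=N/R=N/R_F(\mathcal W_R)$, so equality holds.
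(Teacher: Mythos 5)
Your proof is correct and takes essentially the same route as the paper: monotonicity of $h_F$ under set inclusion applied to an inscribed Wulff shape of radius $R_F(\Omega)$, combined with the value $h_F(\mathcal W_R)=N/R$ from \eqref{cw}. You simply make explicit two points the paper leaves implicit, namely the inclusion $\mathcal W_{R_F(\Omega)}(x_0)\subseteq\Omega$ (via your connectedness argument) and the verification that $R_F(\mathcal W_R)=R$ in the equality case.
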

\begin{proof}
By definition the constant $h_{F}(\Omega)$ is monotone decreasing respect the set inclusion. Then by \eqref{cw} and the definition of anisotropic inradius, we get the inequality  \eqref{hru}.
\end{proof}
As regards a lower bound for the anisotropic Cheeger constant in terms of the inradius of $\Omega$, we have the following.
\begin{prop}
If $\Omega$ is a bounded open convex set in $\R^{N}$, then
%\begin{itemize}
\begin{equation}
\label{hrl}
\frac{1}{R_{F}(\Omega)}\le h_{F}(\Omega).
\end{equation}
Moreover, the inequality is optimal for a suitable sequence of  $N$-rectangular domains.
\end{prop}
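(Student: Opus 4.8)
The plan is to derive \eqref{hrl} from the purely geometric inequality $|K|\le R_F(K)\,P_F(K)$, valid for every bounded convex set $K$, applied to the convex Cheeger set of $\Omega$, and to prove that geometric inequality by combining the anisotropic coarea formula with the monotonicity of the anisotropic perimeter on convex sets. Since $\Omega$ is convex, it admits a \emph{convex} Cheeger set $K_{\Omega}\subseteq\Omega$ (by the theorem recalled above), so that $h_F(\Omega)=P_F(K_{\Omega})/|K_{\Omega}|$; moreover every Wulff shape contained in $K_{\Omega}$ is contained in $\Omega$, hence $R_F(K_{\Omega})\le R_F(\Omega)$. Thus it is enough to prove $|K|\le R_F(K)\,P_F(K)$ for an arbitrary bounded convex set $K$.

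For this, let $d_F$ be the anisotropic distance to $\partial K$. By convexity $d_F$ is concave on $K$, by \eqref{Fd} one has $F(\nabla d_F)=1$ a.e.\ in $K$, and $\max_{\overline K}d_F=R_F(K)=:\rho$. The superlevel sets $K_t:=\{x\in K:\ d_F(x)>t\}$, for $t\in[0,\rho)$, are convex and decreasing in $t$, with $K_0$ equal to the interior of $K$ up to a null set. I would then apply the anisotropic coarea formula to $d_F$ — which follows from the classical coarea formula and the $1$-homogeneity of $F$, since $F(\nabla d_F)=F(\nabla d_F/|\nabla d_F|)\,|\nabla d_F|$ on $\{\nabla d_F\ne 0\}$ — to obtain
\[
|K|=\int_K F(\nabla d_F)\,dx=\int_0^{\rho}P_F(K_t)\,dt .
\]
Since each $K_t$ is a convex subset of $K$, the monotonicity of the anisotropic perimeter under inclusion of convex sets yields $P_F(K_t)\le P_F(K)$ for all $t\in(0,\rho)$, and the identity above gives $|K|\le \rho\,P_F(K)=R_F(K)\,P_F(K)$. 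Applying this with $K=K_{\Omega}$ and using $R_F(K_{\Omega})\le R_F(\Omega)$ produces $h_F(\Omega)=P_F(K_{\Omega})/|K_{\Omega}|\ge 1/R_F(\Omega)$, which is \eqref{hrl}.

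For the optimality, I would use the $N$-rectangles $\Omega_{a,k}$ of Lemma~\ref{fantasticolemma}, oriented so that the short direction $e_1$ satisfies $F(e_1)F^{o}(e_1)=1$ and hence, for $a$ fixed and $k$ large, $R_F(\Omega_{a,k})=aF^{o}(e_1)$ is independent of $k$. Since the anisotropic Cheeger constant of any set does not exceed its own anisotropic isoperimetric ratio, $h_F(\Omega_{a,k})\le P_F(\Omega_{a,k})/|\Omega_{a,k}|$, while \eqref{hrl} gives $h_F(\Omega_{a,k})\ge 1/R_F(\Omega_{a,k})$. Letting $k\to+\infty$ and using \eqref{fantastica}, both sides converge to $1/R_F(\Omega_{a,k})$, so $h_F(\Omega_{a,k})\,R_F(\Omega_{a,k})\to 1$; hence the constant $1$ in \eqref{hrl} cannot be replaced by any larger one (and, combined with \eqref{hru}, one gets $1\le h_F(\Omega)R_F(\Omega)\le N$ for every bounded convex $\Omega$, both constants being optimal).

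The step I expect to be the main obstacle is the monotonicity $P_F(K_t)\le P_F(K)$ for convex subsets, equivalently that eroding a convex body by a Wulff shape does not increase its anisotropic perimeter: in the Euclidean case this is classical, and in the anisotropic convex setting it can be obtained from the representation of $P_F(K)$ as $N$ times the mixed volume of $N-1$ copies of $K$ with one Wulff shape, mixed volumes being monotone under inclusion of convex bodies. A minor point to be checked is the endpoint $t=0$ in the coarea formula, which is harmless since $\{d_F>0\}$ coincides with the interior of $K$ up to a null set and $P_F(K_t)\to P_F(K)$ as $t\to0^{+}$.
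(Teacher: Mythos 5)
Your proof is correct and follows essentially the same route as the paper: the coarea formula applied to $d_F$ on a convex subset, giving $|K|=\int_0^{R_F(K)}P_F(\{d_F>t\})\,dt\le R_F(K)P_F(K)$, combined with $R_F(K)\le R_F(\Omega)$ and the Lemma on $N$-rectangles for optimality. The only differences are cosmetic: you instantiate the argument at the convex Cheeger set rather than passing to the infimum over all convex $K\subseteq\Omega$, and you spell out the monotonicity $P_F(K_t)\le P_F(K)$ (via mixed volumes), which the paper uses implicitly.
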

\begin{proof}
Using \eqref{Fd}, \eqref{eq:omo} and the coarea formula, we have for a bounded convex set $K\subseteq \Omega$ that 
\begin{multline*}
|K|= \int_{K} F(\nabla d_F) dx = 
\int_{0}^{R_{F}(K)
} dt \int_{d_{F}=t} F(n_{\mathcal E})d\sigma=\\=
\int_0^{R_F(K)} P_F(\{d_F\le t\}) dt \le P_F(K) R_F(K).
\end{multline*}
Hence, being $K$ convex $R_{F}(K)\le R_{F}(\Omega)$, then
\[
\frac{P_{F}(K)}{|K|} \ge \frac{1}{R_{F}(\Omega)}.
\]
Passing to the infimum on $K$, we get \eqref{hrl}. As regards the optimality, it follows immediately from \eqref{fantastica}.
\end{proof}
More generally for convex sets it holds the following result (see also for instance \cite{dgmaan}, where the case $N=2$ is given with a different proof).
\begin{prop}
If $\Omega \subset \R^{N}$ is a bounded, open, convex set, then
\begin{equation*}
%\label{pmru}
 \frac{P_{F}(\Omega)}{|\Omega|}\le  \frac{N}{R_{F}(\Omega)}.
\end{equation*}
For the Wulff shape the equality holds.
\end{prop}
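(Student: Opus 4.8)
The plan is to prove the equivalent inequality $R_F(\Omega)\,P_F(\Omega)\le N|\Omega|$ by an anisotropic Gauss--Green argument in which the boundary integrand is bounded pointwise from below. Since $\Omega$ is convex, $d_F$ is concave and continuous on $\overline\Omega$, vanishes on $\partial\Omega$ and is positive inside, so it attains its maximum $R_F(\Omega)$ at some interior point, which after a translation I may take to be the origin. By the definition of $d_F$, every $y\in\partial\Omega$ satisfies $F^o(y)=F^o(-y)\ge R_F(\Omega)$, whence the inscribed Wulff shape $\mathcal W_{R_F(\Omega)}=\{z\colon F^o(z)<R_F(\Omega)\}$ contains no boundary point and, being connected and containing the origin, is contained in $\Omega$.

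The key pointwise estimate is that at $\mathcal H^{N-1}$-almost every $x\in\partial\Omega$, with Euclidean outer unit normal $n_{\mathcal E}(x)$,
\[
\langle x, n_{\mathcal E}(x)\rangle \ge R_F(\Omega)\, F(n_{\mathcal E}(x)).
\]
To establish it I first record that the support function of the unit Wulff shape is $F$: from the identity $F(\nu)=\sup_{\xi\ne 0}\langle\xi,\nu\rangle/F^o(\xi)$ and the $1$-homogeneity of $F^o$ one gets $\sup_{F^o(\xi)\le 1}\langle\xi,\nu\rangle=F(\nu)$, so that $\sup_{z\in\mathcal W_{R_F(\Omega)}}\langle z,n_{\mathcal E}\rangle=R_F(\Omega)\,F(n_{\mathcal E})$. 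On the other hand, by convexity the tangent plane at $x$ is a supporting hyperplane, hence $\Omega$, and in particular $\mathcal W_{R_F(\Omega)}$, lies in the half-space $\{z\colon\langle z,n_{\mathcal E}\rangle\le\langle x,n_{\mathcal E}\rangle\}$; taking the supremum over $z$ in the inscribed Wulff shape yields the claimed inequality.

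With the pointwise bound available, I apply the divergence theorem to the field $V(x)=x$, for which $\dive V=N$. As $\Omega$ is a convex body its reduced boundary agrees with $\partial\Omega$ up to an $\mathcal H^{N-1}$-null set and the Gauss--Green formula holds, giving
\[
N|\Omega| = \int_{\partial\Omega}\langle x, n_{\mathcal E}(x)\rangle\, d\mathcal H^{N-1} \ge R_F(\Omega)\int_{\partial\Omega} F(n_{\mathcal E})\, d\mathcal H^{N-1} = R_F(\Omega)\, P_F(\Omega),
\]
the last equality being the representation of $P_F$ recalled right after \eqref{per}. This is exactly the assertion. For $\Omega=\mathcal W_R$ one checks $R_F(\mathcal W_R)=R$ and that the pointwise inequality becomes an equality at every boundary point, the supporting half-space being tangent to the Wulff shape itself; alternatively, $|\mathcal W_R|=\kappa_N R^N$ together with $P_F(\mathcal W_R)=N\kappa_N R^{N-1}$ from \eqref{isop} gives $P_F(\mathcal W_R)/|\mathcal W_R|=N/R=N/R_F(\mathcal W_R)$, so equality holds.

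The main obstacle is the pointwise boundary inequality, and within it the clean identification of the support function of $\mathcal W$ with $F$ combined with the $\mathcal H^{N-1}$-a.e.\ differentiability of $\partial\Omega$; once these are in place the divergence theorem finishes the proof. Some care is also needed to justify the Gauss--Green formula and the perimeter representation for a merely convex (not necessarily smooth) domain, which is standard for sets of finite perimeter.
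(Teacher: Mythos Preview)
Your proof is correct and follows essentially the same route as the paper: apply the divergence theorem to the vector field $x-x_{0}$ (with $x_{0}$ the anisotropic incenter) and combine it with the pointwise boundary inequality $\langle x-x_{0},n_{\mathcal E}\rangle\ge R_{F}(\Omega)\,F(n_{\mathcal E})$. The only difference is how you reach that pointwise bound: the paper exploits the concavity of $d_{F}$ together with the eikonal relation $F(\nabla d_{F})=1$ (which gives $F(n_{\mathcal E})=1/|\nabla d_{F}|$ on $\partial\Omega$), whereas you argue geometrically via the inscribed Wulff ball $\mathcal W_{R_{F}(\Omega)}\subset\Omega$ and the identification of its support function with $R_{F}(\Omega)F$. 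Both derivations yield the same inequality; your version has the mild advantage of not requiring differentiability of $d_{F}$ at the boundary, while the paper's makes the connection with the anisotropic distance function more explicit.
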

\begin{proof}
Let   $x_{0} \in \Omega$ such that  $R_{F}(\Omega)=d_{F}(x_{0})$. By the concavity of $d_{F}$, we have 
\begin{equation*}
\label{hk}
d_{F}(x_{0}) - d_{F}(x)\le -\nabla d_{F}(x) \cdot (x-x_{0}) = n_{\mathcal{E}}(x) \cdot (x-x_{0})|\nabla d_{F}(x)|.
\end{equation*} 
Hence for $x\in\de\Omega$, it holds that $R_{F}(\Omega) \le n_{\mathcal{E}}(x) \cdot (x-x_{0})|\nabla d_{F}(x)|$. By the divergence theorem, and observing also that $F(n_{\mathcal E})=\frac{1}{|\nabla d_{F}|}$ we have
\begin{multline*}
|\Omega|= \frac{1}{N} \int_{\Omega} \dive (x-x_{0})\, dx=\frac{1}{N}\int_{\de \Omega} (x-x_{0}) \cdot n_{\mathcal E}(x)d\sigma\ge \\ \ge \frac{R_{F}(\Omega)}{N}\int_{\de\Omega}\frac{1}{|\nabla d_{F}(x)|}d\sigma = \frac{R_{F}(\Omega)P_{F}(\Omega)}{N},
\end{multline*}
and this completes the proof.
\end{proof}
\begin{rem}
We observe that the equality in the inequality of the previous proposition holds, in general, also for other kind of convex sets. For example, if $N=2$ and $F=\mathcal E$ the equality holds for circles with two symmetrical caps (see for instance \cite{santalo}).
\end{rem}

An immediate consequence of the anisotropic isoperimetric inequality is the following
\begin{thm}
Let $\Omega$ be a bounded open set. Then
\begin{equation}
\label{fkh}
h_{F}(\Omega)\ge h_{F}(\mathcal W_R),
\end{equation}
where $\mathcal W_R$ is the Wulff shape such that $|\Omega|=|\mathcal W_R|$, and the equality holds if and only if $\Omega$ is a Wulff shape.
\end{thm}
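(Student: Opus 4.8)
The plan is to deduce \eqref{fkh} directly from the anisotropic isoperimetric inequality \eqref{isop} together with the monotonicity of the Cheeger ratio under the rescaling built into the definition of $h_F$. First I would let $\mathcal W_R$ be the Wulff shape with $|\mathcal W_R|=|\Omega|$ and let $K_\Omega\subset\Omega$ be any admissible competitor for $h_F(\Omega)$ (if $\Omega$ is Lipschitz one may take $K_\Omega$ to be a Cheeger set, but the argument works for an arbitrary $K\subset\Omega$ and then passes to the infimum). Set $r$ to be the radius for which the Wulff shape $\mathcal W_r$ satisfies $|\mathcal W_r|=|K|$; by \eqref{isop} we have $P_F(K)\ge P_F(\mathcal W_r)=N\kappa_N^{1/N}|K|^{1-1/N}$, so that
\[
\frac{P_F(K)}{|K|}\ge \frac{N\kappa_N^{1/N}}{|K|^{1/N}}.
\]

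Next I would observe that $K\subset\Omega$ forces $|K|\le|\Omega|=|\mathcal W_R|=\kappa_N R^N$, hence $|K|^{1/N}\le \kappa_N^{1/N} R$, and therefore
\[
\frac{P_F(K)}{|K|}\ge \frac{N\kappa_N^{1/N}}{\kappa_N^{1/N}R}=\frac{N}{R}=h_F(\mathcal W_R),
\]
where the last equality is \eqref{cw}. Taking the infimum over all admissible $K\subset\Omega$ yields $h_F(\Omega)\ge h_F(\mathcal W_R)$, which is \eqref{fkh}.

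For the equality case I would argue as follows. If $\Omega=\mathcal W_R$ then $K_{\mathcal W_R}=\mathcal W_R$ and equality is immediate. Conversely, suppose $h_F(\Omega)=h_F(\mathcal W_R)=N/R$. Since $\Omega$ itself is an admissible competitor (taking $K=\Omega$), we get $P_F(\Omega)/|\Omega|\ge N/R$; but also, applying \eqref{isop} to $K=\Omega$ directly gives $P_F(\Omega)\ge P_F(\mathcal W_R)=N|\mathcal W_R|^{1-1/N}\kappa_N^{1/N}$, so $P_F(\Omega)/|\Omega|\ge N/R$ always, and equality in the chain forces equality in the isoperimetric inequality \eqref{isop} applied to a minimizing sequence. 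More carefully, let $K_j\subset\Omega$ be a minimizing sequence for $h_F(\Omega)$; the inequality chain above shows $N\kappa_N^{1/N}/|K_j|^{1/N}\le P_F(K_j)/|K_j|\to N/R$ while $|K_j|\le\kappa_N R^N$, forcing $|K_j|\to\kappa_N R^N=|\Omega|$ and $P_F(K_j)/(N\kappa_N^{1/N}|K_j|^{1-1/N})\to 1$, i.e. asymptotic equality in \eqref{isop}. Using the rigidity statement in \eqref{isop} (equality holds only for Wulff shapes) together with $|K_j|\to|\Omega|$ and $K_j\subset\Omega$, one concludes $\Omega$ must itself be (equivalent to) a Wulff shape.

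The main obstacle is the rigidity direction: turning the asymptotic equality in the isoperimetric inequality along a minimizing sequence into the conclusion that $\Omega$ is a Wulff shape requires either a quantitative (stability) version of \eqref{isop} or an a priori regularity/compactness argument for Cheeger sets. The cleanest route, which I would take, is to invoke the existence of a convex Cheeger set for convex $\Omega$ (the theorem stated just before \eqref{hru}) or, in the Lipschitz case, the existence of a Cheeger set $K_\Omega$ from \cite{kn08}; then the chain of inequalities applied to $K=K_\Omega$ is a genuine equality, which forces both $|K_\Omega|=|\Omega|$ (so $K_\Omega=\Omega$ up to null sets) and equality in \eqref{isop} for $K_\Omega$, whence $\Omega$ is a Wulff shape by the rigidity already recorded after \eqref{isop}. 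This sidesteps any need for quantitative stability and keeps the proof within the tools assembled in the excerpt.
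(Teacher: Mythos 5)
Your proof of the inequality \eqref{fkh} is essentially identical to the paper's: apply the anisotropic isoperimetric inequality \eqref{isop} to an arbitrary competitor $K\subset\Omega$, use $|K|\le|\Omega|$ to compare with the Wulff shape having the volume of $\Omega$, and pass to the infimum. The paper in fact never proves the equality case, and your supplement via an actual Cheeger set $K_{\Omega}$ (forcing $|K_{\Omega}|=|\Omega|$ and then rigidity in \eqref{isop}) is sound, subject to the caveat you already flag about existence of Cheeger sets when $\Omega$ is merely bounded and open.
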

\begin{proof}
Let $K\subseteq\Omega$. Then, if $|\mathcal W_{r}|=|K|$, by \eqref{isop} we have:
\[
\frac{P_{F}(K)}{|K|} \ge \frac{P_{F}(\mathcal W_{r})}{|\mathcal W_{r}|}\ge \frac{P_{F}(\mathcal W_R)}{|\mathcal W_{R}|}=h(\mathcal W_R).
\]
Passing to the infimum on $K$, we get the result.
\end{proof}
\begin{rem}
Let $\Omega$ be an open, bounded set of $\R^{N}$ inequality \eqref{hru} implies 
\begin{equation}
\label{stab}
h_F(\Omega)-h_F(W_R) \le N\left(\frac{1}{R_{F}(\Omega)}-\frac{1}{R} \right).
\end{equation}
When $\Omega$ is convex \eqref{stab} can be read as  a stability result for \eqref{fkh}.
\end{rem}

In \cite{dgmana} it is proved the following upper bound for  $\lambda_{F}(p,\Omega)$ in terms of volume and anisotropic perimeter of $\Omega$ for convex domains.
\begin{thm}
Let $\Omega \subset \R^N$ be a bounded, convex,  open set. Then
\begin{equation}
\label{pol}
\lambda_{F}(p,\Omega) \le \left( \frac{\pi_p}{2}\right)^p \displaystyle \left(\frac{P_F(\Omega)}{|\Omega|}\right)^p,
\end{equation}
where $\pi_p$ is defined in \eqref{defpp} and $P_F(\Omega)$ is the anisotropic perimeter of $\Omega$ defined in \eqref{per}.
\end{thm}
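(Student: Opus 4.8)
The plan is to bound the Rayleigh quotient \eqref{rayleigh} from above by testing it on a suitable function of the anisotropic distance $d_F$, exploiting that for convex $\Omega$ one has $F(\nabla d_F)=1$ a.e.\ (see \eqref{Fd}) and that the inner parallel sets $\Omega_t:=\{d_F>t\}$ are convex and contained in $\Omega$. First I would introduce $\sin_p\colon[0,\pi_p/2]\to[0,(p-1)^{1/p}]$ as the inverse of $t\mapsto\int_0^t(1-s^p/(p-1))^{-1/p}\,ds$, so that $\sin_p(0)=0$, $\sin_p$ is increasing, $\bigl(\sin_p'\bigr)^p=1-\sin_p^p/(p-1)$ and $\sin_p'(\pi_p/2)=0$. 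A short computation (differentiating the first integral) shows that $\sin_p$ solves $-\bigl(|\sin_p'|^{p-2}\sin_p'\bigr)'=|\sin_p|^{p-2}\sin_p$ on $(0,\pi_p/2)$, hence it is the first eigenfunction of the mixed Dirichlet--Neumann one–dimensional $p$-Laplacian on $(0,\pi_p/2)$ with eigenvalue $1$; multiplying the equation by $\sin_p$ and integrating by parts (both boundary terms vanish) gives the identity
\[
\int_0^{\pi_p/2}|\sin_p'(\theta)|^p\,d\theta=\int_0^{\pi_p/2}|\sin_p(\theta)|^p\,d\theta,
\]
which is what ultimately makes the estimate sharp, i.e.\ with no loss of constants.

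Next, set $R:=R_F(\Omega)$ (see \eqref{inrad}) and $w(t):=P_F(\Omega_t)$ for $t\in[0,R]$, with $P_F$ as in \eqref{per}. Using $F(\nabla d_F)=1$, the coarea formula and the relation $1/|\nabla d_F|=F(n_{\mathcal E})$ on the level sets $\{d_F=t\}$, one obtains $\int_0^R w(t)\,dt=|\Omega|$; moreover $w(0)=P_F(\Omega)$ and, since the $\Omega_t\subseteq\Omega$ are convex, $0\le w(t)\le w(0)$ for every $t$. Define $\Phi(t):=|\Omega|^{-1}\int_0^t w(\tau)\,d\tau$, which is absolutely continuous, strictly increasing and maps $[0,R]$ onto $[0,1]$, and take as competitor $u:=g(d_F)\in W_0^{1,p}(\Omega)$ with $g(t):=\sin_p\!\bigl(\tfrac{\pi_p}{2}\Phi(t)\bigr)$ (so $g(0)=0$). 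By \eqref{Fd} and $1$-homogeneity of $F$ one has $F(\nabla u)=|g'(d_F)|$, and the coarea formula rewrites the Rayleigh quotient of $u$ as the weighted one–dimensional quotient $\bigl(\int_0^R|g'|^p w\,dt\bigr)\big/\bigl(\int_0^R|g|^p w\,dt\bigr)$.

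Finally I would perform the elementary estimate on this quotient: since $g'(t)=\tfrac{\pi_p}{2}\,|\Omega|^{-1}w(t)\,\sin_p'\!\bigl(\tfrac{\pi_p}{2}\Phi(t)\bigr)$, the bound $w(t)\le w(0)=P_F(\Omega)$ yields $|g'(t)|^p w(t)\le\bigl(\tfrac{\pi_p}{2}\bigr)^p\bigl(P_F(\Omega)/|\Omega|\bigr)^p\,w(t)\,\bigl|\sin_p'(\tfrac{\pi_p}{2}\Phi(t))\bigr|^p$; then the change of variables $s=\Phi(t)$ (so that $w(t)\,dt=|\Omega|\,ds$), followed by $\theta=\tfrac{\pi_p}{2}s$, reduces the numerator and denominator to $\int_0^{\pi_p/2}|\sin_p'|^p$ and $\int_0^{\pi_p/2}|\sin_p|^p$ respectively, which are equal by the identity above. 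Combining these with \eqref{rayleigh} gives precisely $\lambda_F(p,\Omega)\le(\pi_p/2)^p\,(P_F(\Omega)/|\Omega|)^p$, i.e.\ \eqref{pol}.

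The only inequality in the whole argument is $w(t)\le w(0)$, that is the monotonicity of the anisotropic perimeter under inclusion of convex bodies — which is where convexity is genuinely used, and which becomes an equality exactly in the slab-like limit, accounting for sharpness. I expect the main technical care to go into justifying the coarea manipulations for the merely Lipschitz function $d_F$ (in particular $\int_0^R P_F(\Omega_t)\,dt=|\Omega|$ and the level-set identity $1/|\nabla d_F|=F(n_{\mathcal E})$) and into checking that $u=g(d_F)$ is an admissible element of $W_0^{1,p}(\Omega)$; everything else is bookkeeping of constants that closes with no slack.
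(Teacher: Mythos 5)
Your argument is essentially correct, but note first that the paper does not actually prove this statement: inequality \eqref{pol} is imported verbatim from \cite{dgmana}, so there is no internal proof to compare against. What you propose is the natural anisotropic version of P\'olya's classical ``web function'' argument, and it does close: the test function $g(d_F)$ with $g=\sin_p\bigl(\tfrac{\pi_p}{2}\Phi\bigr)$, the coarea reduction of the Rayleigh quotient \eqref{rayleigh} to the weighted one-dimensional quotient with weight $w(t)=P_F(\Omega_t)$, the identity $\int_0^{\pi_p/2}|\sin_p'|^p\,d\theta=\int_0^{\pi_p/2}|\sin_p|^p\,d\theta$ (your derivation via the boundary terms at $0$ and $\pi_p/2$ is right), and the single inequality $w(t)\le P_F(\Omega)$ fit together exactly as you describe and produce the constant $(\pi_p/2)^p$ with no slack. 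Two ingredients deserve explicit justification rather than assertion. First, the monotonicity $P_F(K)\le P_F(L)$ for convex $K\subseteq L$: since $F$ is the support function of the Wulff shape $\mathcal W$, one has $P_F(K)=N\,V(K,\dots,K,\mathcal W)$, a mixed volume, and monotonicity of mixed volumes under inclusion gives the claim; a one-line citation (Schneider, or \cite{dgmana}) suffices, but without it this step is the only real gap. Second, the coarea identities $\int_0^{R_F(\Omega)}P_F(\Omega_t)\,dt=|\Omega|$ and $\int_\Omega h(d_F)\,dx=\int_0^{R_F(\Omega)}h(t)\,P_F(\Omega_t)\,dt$ for the Lipschitz function $d_F$: these are exactly the manipulations the paper already performs in the proof of \eqref{hrl}, using \eqref{Fd} and $F(n_{\mathcal E})=1/|\nabla d_F|$ on the level sets, so you can argue verbatim as there. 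With those two points made precise, your proof is complete and self-contained, which is arguably an improvement over the paper's bare citation; it also makes transparent why equality is attained only in the slab limit, where $P_F(\Omega_t)/P_F(\Omega)\to 1$ for all $t$.
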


The following reverse anisotropic Cheeger inequality holds (see \cite{pa} for the Euclidean case with $N=p=2$).
\begin{prop}
Let $\Omega \subset \R^N$, $N \ge 2$, be a bounded open convex open set. Then
\begin{equation}
\label{rc}
\lambda_{F}(p,\Omega)\le \left( \frac{\pi_p}{2}\right)^p h_F(\Omega)^p,
\end{equation}
where $\pi_p$ is defined in \eqref{defpp}. 
\end{prop}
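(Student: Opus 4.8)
The plan is to reduce the statement to the already established upper bound \eqref{pol} by testing it on the Cheeger set of $\Omega$ and then exploiting the monotonicity of $\lambda_F$ with respect to domain inclusion.

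First I would invoke the existence result for the convex Cheeger set: since $\Omega$ is bounded, open and convex, there is a (unique) convex Cheeger set $K_\Omega\subseteq\Omega$, so that
\[
h_F(\Omega)=\frac{P_F(K_\Omega)}{|K_\Omega|}.
\]
Up to replacing $K_\Omega$ by its interior, which changes neither $|K_\Omega|$ nor $P_F(K_\Omega)$, we may assume $K_\Omega$ is a bounded, open, convex set, hence an admissible domain for \eqref{pol}. Applying that inequality to $K_\Omega$ gives
\[
\lambda_F(p,K_\Omega)\le\left(\frac{\pi_p}{2}\right)^p\left(\frac{P_F(K_\Omega)}{|K_\Omega|}\right)^p=\left(\frac{\pi_p}{2}\right)^p h_F(\Omega)^p.
\]

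Second, since $K_\Omega\subseteq\Omega$, the monotonicity property of $\lambda_F$ under set inclusion (item 2 of the Proposition in Section 2) yields $\lambda_F(p,\Omega)\le\lambda_F(p,K_\Omega)$. Chaining this with the previous estimate produces exactly \eqref{rc}.

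There is essentially no serious obstacle here: the argument is a two-line chain once \eqref{pol} and the monotonicity are in hand. The only point requiring a little care is that the Cheeger set must be a legitimate competitor for \eqref{pol}, i.e.\ a bounded open convex set; this is guaranteed by the cited existence and convexity results for the Cheeger set of a convex domain, together with the harmless passage to the interior noted above.
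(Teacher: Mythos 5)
Your proposal is correct and follows essentially the same route as the paper: the paper also applies the bound \eqref{pol} to the unique convex Cheeger set $K_\Omega\subseteq\Omega$ and combines it with the monotonicity $\lambda_F(p,\Omega)\le\lambda_F(p,K_\Omega)$. The only difference is your (harmless, slightly more careful) remark about passing to the interior of $K_\Omega$ so that it is an admissible open set for \eqref{pol}.
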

\begin{proof}
Let $K_{\Omega}\subseteq\Omega$ be the convex Cheeger set of $\Omega$. Being the $\lambda_{p}(\cdot)$ monotone decreasing by set inclusion, then by \eqref{pol} we have
\[
\lambda_{F}(p,\Omega) \le \lambda_{F}(p,K_{\Omega}) \le \left(\frac{\pi_{p}}{2}\right)^{p} \left(\frac{P_{F}(K_{\Omega})}{|K_\Omega|}\right)^{p}=\left( \frac{\pi_p}{2}\right)^p h_F(\Omega)^p.
\]
\end{proof}
The equality sign holds in the limiting case when $\Omega$ approaches a slab. This will be shown in Theorem \ref{optimality}.

\section{The $\mathcal{P}$-function}
In order to give some sharp lower bound for  $\lambda_{F}(p,\Omega)$ we will use the so-called $\mathcal P$-function method. Let us consider the general problem 
\begin{equation}
\label{pb_gen}
\begin{cases}
-\mathcal Q_{p}w =f(w) &\text{in }\Omega\\
w=0 &\text{on }\de\Omega,
\end{cases}
\end{equation}
where $f$ is a nonnegative $C^{1}(0,+\infty)\cap C^{0}([0,+\infty[)$ function, and define
\begin{equation*}
\label{pfunct}
\mathcal P(x):=\frac{p-1}{p}F^p(\nabla w(x))-\int_{w(x)}^{\max_{\bar \Omega} w}f(s)ds 
\end{equation*}
 The following result is proved in \cite{cfv}.
\begin{prop}
\label{cfvprop}
Let $\Omega$ be a domain in $ \R^{N}$, $ N\ge 2$, and $w\in W_{0}^{1,p}(\Omega)$ be a solution of \eqref{pb_gen}. Set
\[
d_{ij}:= \frac{1}{F(\nabla u_{\Omega})}\de_{\xi_{i}\xi_{j}}\left[\frac{F^{p}}{p}\right](\nabla u_{\Omega}),
\]
Then it holds that
\[
\left( d_{ij}\mathcal P_{i} \right)_{j} -b_{k}\mathcal P_{k} \ge 0 \quad\text{ in }\{\nabla u_{\Omega}\ne 0 \}
\]
where
\begin{equation*}
b_{k}=\frac{p-2}{F^{3}(\nabla w)} F_{\xi_{\ell}}(\nabla w)\mathcal P_{x_{\ell}} F_{\xi_{k}}(\nabla w)+\frac{2p-3}{F^{2}(\nabla w)} \left(   \frac{F_{\xi_{k}\xi_{\ell}}(\nabla w)\mathcal P_{x_{\ell}}}{p-1} - f(w) F_{\xi_{k}}(\nabla w)\right)
\end{equation*}
\end{prop}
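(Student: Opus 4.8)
The plan is to prove the inequality by a direct computation on the open set $\{\nabla w\neq 0\}$, where, by elliptic regularity, $w\in C^{3}(\{\nabla w\neq 0\})$ and the equation in \eqref{pb_gen} takes the non‑divergence form $a^{ij}(\nabla w)\,w_{x_ix_j}=-f(w)$, with $a^{ij}(\xi):=\partial_{\xi_i\xi_j}\!\big[\tfrac{1}{p}F^{p}\big](\xi)$ the symmetric matrix which, by \eqref{strong}, is positive definite for $\xi\neq 0$; with this notation $d_{ij}=a^{ij}(\nabla w)/F(\nabla w)$. Throughout I would lean on the homogeneity identities recalled in Section~2: $F^{p}$ is $p$‑homogeneous, so $[F^{p}]_{\xi}$ is $(p-1)$‑homogeneous and $a^{ij}$ is $(p-2)$‑homogeneous, whence Euler's relations give $\langle F_{\xi}(\xi),\xi\rangle=F(\xi)$, $F_{\xi_i\xi_j}(\xi)\xi_j=0$, $a^{ij}(\xi)\xi_i\xi_j=(p-1)F^{p}(\xi)$, and $\partial_{\xi_m}a^{ij}(\xi)\,\xi_m=(p-2)\,a^{ij}(\xi)$.

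First I would differentiate $\mathcal P$ to get $\mathcal P_{x_i}=(p-1)F^{p-1}(\nabla w)F_{\xi_k}(\nabla w)\,w_{x_ix_k}+f(w)\,w_{x_i}$, and then differentiate once more; in $\mathcal P_{x_ix_j}$ the third derivatives $w_{x_ix_jx_k}$ enter linearly (with coefficient $(p-1)F^{p-1}(\nabla w)F_{\xi_k}(\nabla w)$), alongside terms quadratic in the Hessian, a term $f'(w)w_{x_i}w_{x_j}$ and a term $f(w)w_{x_ix_j}$. Next I would expand $\big(d_{ij}\mathcal P_{x_i}\big)_{x_j}$ by the product and chain rules, obtaining: $(i)$ the term $F(\nabla w)^{-1}a^{ij}(\nabla w)\mathcal P_{x_ix_j}$; $(ii)$ a term from differentiating the coefficient $a^{ij}(\nabla w)$, involving $\partial_{\xi_m}a^{ij}(\nabla w)\,w_{x_jx_m}$ (third derivatives of $F^{p}$); and $(iii)$ a term from $\partial_{x_j}\big[F(\nabla w)^{-1}\big]=-F(\nabla w)^{-2}F_{\xi_m}(\nabla w)w_{x_jx_m}$. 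In $(i)$ the third derivatives of $w$ are removed by differentiating the equation: $a^{ij}(\nabla w)w_{x_ix_jx_k}=-f'(w)w_{x_k}-\partial_{\xi_m}a^{ij}(\nabla w)\,w_{x_kx_m}w_{x_ix_j}$. A convenient cancellation then takes place: the $f'(w)$ produced by contracting $f'(w)w_{x_i}w_{x_j}$ with $a^{ij}$ equals $(p-1)F^{p}f'(w)$ by Euler, while the $f'(w)$ produced by the third‑derivative substitution equals $-(p-1)F^{p}f'(w)$ by $\langle F_{\xi}(\nabla w),\nabla w\rangle=F(\nabla w)$, so the two cancel and no sign condition on $f'$ is required.

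The decisive step is the reorganization. Using the first‑order identity for $\mathcal P_{x_i}$ to re‑express every Hessian entry saturated with $F_{\xi}(\nabla w)$, and invoking $\partial_{\xi_m}a^{ij}(\nabla w)\,w_{x_m}=(p-2)a^{ij}(\nabla w)$, $F_{\xi_i\xi_j}(\nabla w)w_{x_j}=0$ together with the equation $a^{ij}(\nabla w)w_{x_ix_j}=-f(w)$, I would collect all contributions proportional to $\mathcal P_{x_k}$; these reassemble precisely into $b_{k}\mathcal P_{x_k}$ with the coefficients $\tfrac{p-2}{F^{3}}$ and $\tfrac{2p-3}{F^{2}}$ of the statement — note that this matching mixes orders, since $\mathcal P_{x_k}$ itself still carries the Hessian, which is exactly why the combination $f(w)F_{\xi_k}$ must appear inside $b_{k}$. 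What is left is $F(\nabla w)^{-1}$ times a quadratic form $\mathcal Q(D^{2}w)$ in the Hessian alone, of the form $(p-1)\,a^{ij}(\nabla w)a^{km}(\nabla w)\,w_{x_ix_k}w_{x_jx_m}$ corrected by terms involving contractions of the Hessian with $F_{\xi}(\nabla w)$. Splitting off the direction of $\nabla w$ and using the positive definiteness of $[F^{p}]_{\xi\xi}$, one checks $\mathcal Q(D^{2}w)\ge 0$; when $p=2$, $F=|\cdot|$, this residual equals $|D^{2}w|^{2}-2\big|D^{2}w\,\tfrac{\nabla w}{|\nabla w|}\big|^{2}+\big\langle D^{2}w\,\tfrac{\nabla w}{|\nabla w|},\tfrac{\nabla w}{|\nabla w|}\big\rangle^{2}$, the squared norm of the Hessian restricted to $(\nabla w)^{\perp}$, which is manifestly nonnegative.

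I expect the main obstacle to be exactly this last step. After full expansion one is faced with on the order of a dozen terms, several carrying three contracted indices, and one has to verify simultaneously that the first‑order part is exactly $b_{k}\mathcal P_{x_k}$ and that the remaining second‑order part is a nonnegative quadratic form. The repeated use of the homogeneity and Euler identities for $F$ at the correct degree, and keeping the contracted‑index bookkeeping under control, are where the care lies; once the algebra is properly arranged the sign of $\mathcal Q$ is immediate.
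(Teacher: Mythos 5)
You should first note that the paper does not prove this proposition at all: it is imported from \cite{cfv} with the single sentence ``The following result is proved in \cite{cfv}'', so there is no internal argument to compare against, and your plan is in effect a reconstruction of the computation carried out in that reference. The skeleton you describe is the right one: restricting to $\{\nabla w\neq 0\}$ where $w$ is $C^{3}$, rewriting the equation in non-divergence form $a^{ij}(\nabla w)w_{x_ix_j}=-f(w)$ with $a^{ij}=[F^{p}/p]_{\xi_i\xi_j}$, eliminating third derivatives by differentiating the equation, and the cancellation of the two $f'(w)$ contributions via the Euler identities $a^{ij}(\nabla w)w_{x_i}w_{x_j}=(p-1)F^{p}(\nabla w)$ and $F_{\xi_k}(\nabla w)w_{x_k}=F(\nabla w)$ are all correct and are exactly the moves that make the argument work without any sign assumption on $f'$.

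That said, as written the proposal has a genuine gap precisely where you flag it, and it is not merely a matter of bookkeeping. Two claims are asserted but not established. First, that the first-order remainder reassembles \emph{exactly} into $b_{k}\mathcal P_{x_k}$ with the specific coefficients $\tfrac{p-2}{F^{3}}$ and $\tfrac{2p-3}{F^{2}}$ of the statement: since $\mathcal P_{x_k}$ itself contains Hessian terms, this decomposition is not forced, and only carrying out the identification certifies that the leftover is purely second order. Second, and more seriously, the nonnegativity of the residual quadratic form $\mathcal Q(D^{2}w)$ for general $p$ and general anisotropic $F$ is not ``immediate once the algebra is arranged'': it requires a specific inequality, namely a Cauchy--Schwarz inequality in the scalar product induced by the positive definite matrix $[F^{p}]_{\xi\xi}(\nabla w)$ (equivalently, a completion of the square for $\mathrm{tr}(AWAW)$ with $A=(a^{ij}(\nabla w))$, $W=D^{2}w$, along the distinguished direction $F_{\xi}(\nabla w)$). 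Your verification for $p=2$, $F=|\cdot|$, where the residual is the squared norm of the Hessian restricted to $(\nabla w)^{\perp}$, is a useful sanity check but does not substitute for the anisotropic case, in which $A$ is not a multiple of the identity and the relevant ``orthogonal complement'' must be taken with respect to the $A$-metric. Until that inequality is stated and proved, the decisive step of the proposition remains open in your write-up.
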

As a consequence of the previous result we get the following maximum principle for $\mathcal P$.
\begin{thm}
\label{prsperb}
Let $\Omega$ be a bounded $C^{2}$ domain in $ \R^{N}$, $ N \ge 2$, with nonnegative anisotropic mean curvature $\mathcal H_{F}\ge 0$ on $\de\Omega$, and $w>0$ be a solution to the problem  \eqref{pb_gen}, 
%\[
%\begin{cases}
%-\mathcal Q_{p}w =f(w) &\text{in }\Omega\\
%w=0 &\text{on }\de\Omega.
%\end{cases}
%\] 
then
\begin{equation}
\label{sperbineq}
\mathcal P(x)=\frac{p-1}{p}F^p(\nabla w(x))-\int_{w(x)}^{\max_{\bar \Omega} w}f(s)ds \le 0  \quad \text{ in } \overline{\Omega},
\end{equation}
that is  the function $\mathcal P$ achieves its maximum at the points $x_M \in \Omega$ such that  $w(x_M)=\max_{\bar\Omega} w$.
\end{thm}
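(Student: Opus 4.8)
The plan is to combine the differential inequality of Proposition \ref{cfvprop} with the classical strong maximum principle and a boundary-point (Hopf-type) argument to rule out an interior or boundary maximum of $\mathcal P$ other than at the critical points of $w$. First I would observe that, on the open set $\{\nabla w \neq 0\}$, the matrix $(d_{ij})$ is symmetric and positive definite (by \eqref{strong} together with the normalization $F(\nabla w)>0$ there), so the operator $L\mathcal P := (d_{ij}\mathcal P_i)_j - b_k\mathcal P_k$ is uniformly elliptic on compact subsets of $\{\nabla w\neq 0\}$ with locally bounded coefficients (the $C^{3,\beta}$ regularity of $F$ away from the origin and $w\in C^3(\{\nabla w\neq 0\})$ guarantee this). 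Hence $\mathcal P$ is a subsolution, $L\mathcal P\ge 0$, and the strong maximum principle applies: if $\mathcal P$ attains an interior maximum at a point where $\nabla w\neq 0$, then $\mathcal P$ is constant near that point, and a connectedness/continuation argument forces the maximum to propagate to the boundary or to a critical point of $w$.

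Next I would handle the two delicate locations. At a point $x_M$ with $w(x_M)=\max_{\bar\Omega}w$ we have $\nabla w(x_M)=0$, so there $\mathcal P(x_M) = -\int_{M}^{M}f = 0$; thus to prove \eqref{sperbineq} it suffices to show $\mathcal P\le 0 = \mathcal P(x_M)$ everywhere, i.e. that the maximum of $\mathcal P$ over $\overline\Omega$ is not attained at any other type of point with a strictly positive value. Suppose for contradiction that $\max_{\overline\Omega}\mathcal P>0$. By the strong maximum principle it cannot be attained at an interior point where $\nabla w\neq0$ (unless $\mathcal P$ is locally constant, which is then ruled out by continuing the argument); it is not attained at interior critical points of $w$ other than the maximum points of $w$, since at a critical point $\mathcal P = -\int_{w}^{M}f(s)\,ds \le 0$ because $f\ge 0$ and $w\le M$. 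So the positive maximum would have to be attained on $\partial\Omega$.

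The boundary case is where the hypothesis $\mathcal H_F\ge 0$ enters and is the main obstacle. On $\partial\Omega$ one has $w=0$, hence $\mathcal P = \frac{p-1}{p}F^p(\nabla w) - \int_0^M f$; since $\mathcal P>0$ is assumed at such a point, in particular $\nabla w\neq 0$ there, so $\mathcal P$ is smooth up to the boundary near that point and the Hopf lemma for the operator $L$ gives $\partial \mathcal P/\partial n_{\mathcal E} > 0$ in the Euclidean outer normal direction (equivalently $\partial\mathcal P/\partial n_F>0$). On the other hand I would compute the normal derivative of $\mathcal P$ on $\partial\Omega$ directly: differentiating $\mathcal P$ along $n_F$, using \eqref{der_nf}, the equation $-\mathcal Q_p w = f(w)$ rewritten via formula \eqref{xiaformula} on the level set $\{w=0\}$, and $f(0)\ge 0$, one finds that $\partial\mathcal P/\partial n_F$ has the form $-(\text{positive})\big(\mathcal H_F F(\nabla w) + \text{nonnegative terms}\big)\le 0$ precisely when $\mathcal H_F\ge 0$. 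This contradicts the strict positivity from Hopf's lemma, so no positive boundary maximum exists either. Combining all cases, $\max_{\overline\Omega}\mathcal P\le 0$, which is \eqref{sperbineq}; and equality $\mathcal P=0$ holds at the points where $w$ attains its maximum. The technical care needed is in justifying the Hopf argument near $\partial\Omega$ despite the possible degeneracy of the operator at critical points of $w$ — this is why one restricts attention to a boundary point where $\mathcal P>0$, forcing $\nabla w\neq 0$ there and hence genuine uniform ellipticity in a neighbourhood.
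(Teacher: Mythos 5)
Your proposal is correct and follows essentially the same route as the paper: apply the differential inequality of Proposition \ref{cfvprop} to get a maximum principle for $\mathcal P$ on $\{\nabla w\neq 0\}$, note that $\mathcal P\le 0$ at critical points of $w$ with equality at the maximum points, and exclude a boundary maximum by computing $\frac{\de \mathcal P}{\de n_F}=-F^{p-1}(\nabla w)\,\mathcal H_F\le 0$ via \eqref{xiaformula} and contradicting the Hopf lemma. The only cosmetic difference is that you run it as a contradiction from $\max_{\overline\Omega}\mathcal P>0$ while the paper enumerates the three possible locations of the maximum directly.
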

\begin{proof}
Let us denote by $\mathcal C$ the set of the critical points of $w$, that is $\mathcal C=\{x \in \overline\Omega \colon \nabla w(x) =0\}$. Being $\de \Omega$ $C^{2}$, by the Hopf Lemma (see for example \cite{ct}), $\mathcal C\cap \de \Omega=\emptyset$.

Applying Proposition \ref{cfvprop}, the function  $\mathcal P$ verifies a maximum principle in the open set $\Omega \setminus \mathcal C$. Then we have
\[
\max_{\overline{\Omega \setminus \mathcal C}} \mathcal P = \max_{\de\left(\Omega \setminus \mathcal C\right)}\mathcal P.
\]
Hence one of the following three cases occur:
\begin{enumerate}
\item the maximum point of $\mathcal P$ is on $\de \Omega$;
\item the maximum point of $\mathcal P$ is on $\mathcal C$;
\item the function $\mathcal P $ is constant in $\overline \Omega$.
\end{enumerate}
In order to prove the theorem we have to show that  statement 1 cannot happen.
Let us compute the derivative of $\mathcal P$ in the direction of the anisotropic normal $n_F$,  in the sense of  \eqref{der_nf}. Hence on $\de\Omega$ we get
\begin{multline*}
\label{derivata}
\frac{\de \mathcal P}{\de n_F}= \dfrac{p-1}{p}\frac{\de}{\de n_F}\left(-\frac{\de w}{\de n_F}\right)^p+f(w)\frac{\de w}{\de n_F} =-(p-1) \left(-\frac{\de w}{\de n_F}\right)^{p-1} \frac{\de^2 w}{\de n_F^2}+f(w)\frac{\de w}{\de n_F}=\\= -F(\nabla w) \mathcal Q_p[w]-F^{p-1}(\nabla w) \mathcal H_F-f(w)F(\nabla w)=-F^{p-1}(\nabla w) \mathcal H_F,
\end{multline*}
where last identity follows by \eqref{xiaformula}. On the other hand, if a maximum point $\bar x$ of $\mathcal P$ is on $\de\Omega$, by Hopf Lemma either $\mathcal P$ is constant in $\overline\Omega$, or $\frac{\de \mathcal P}{\de n_F}(\bar x)> 0$.
Hence being $\mathcal H_F\ge 0$ we have a contradiction.
\end{proof}
\begin{rem}
\label{rmp}
Let $\Omega$ be a bounded, open convex set and let  us consider  $u$ a positive eigenfunction  relative to the first eigenvalue $\lambda_{F}(p,\Omega)$ to the problem \eqref{eigpb}. Then denoted by $M= \max_{\overline \Omega}u$,  inequality \eqref{sperbineq} becomes
\begin{equation}
\label{Pl}
(p-1)F^p(\nabla u) \le \lambda_{F}(p,\Omega) \left( M^p-u^p\right) \quad \text{ in } \overline{\Omega}
\end{equation}
Integrating over $\Omega$ in both sides of \eqref{Pl} and recalling that $u$ satisfies problem \eqref{eigpb}, we get 
 \begin{equation*}
 \label{mp}
 \int_{\Omega}u^p \le \displaystyle \frac{M^p|\Omega|}{p}.
 \end{equation*}
\end{rem}
By the definition of $\pi_p$ in \eqref{defpp}, we have 
\[
\frac{\pi_{p}}{2}=\int_{0}^{(p-1)^{\frac1p}} \left[ 1-\frac{t^{p}}{p-1} \right]^{-\frac 1 p}\,dt=\int_{0}^{M(p-1)^{\frac1p}}\left(M^{p}-\frac{t^{p}}{p-1}\right)^{-\frac1p}\,dt
\]
where $u$ is a first positive eigenfunction of $-\mathcal Q_{p}$. % and, in order to simplify the notation, $M=\max_{\overline\Omega}u$. 
 Let us consider the following function
\begin{equation*}
\label{phi}
\Phi(s)=\left(\frac{\pi_{p}}{2}\right)^{\frac{p}{p-1}}-
\left(\int_{s(p-1)^{\frac 1p}}^{M(p-1)^{\frac1p}}\frac{dt}{\left(M^{p}-\frac{t^{p}}{p-1}\right)^{\frac1p}}\right)^{\frac{p}{p-1}}, \quad s \in [0,M].
\end{equation*}
\begin{prop}
\label{efrat}
Let $\Omega$ be a bounded $C^{2}$ domain in $ \R^{N}$, $ N \ge 2$, with nonnegative anisotropic mean curvature $\mathcal H_{F}\ge 0$ on $\de\Omega$, then the following inequalities hold
\begin{equation}
\label{disphi}
\Phi(u(x))\le \frac{p}{p-1}\lambda_{F}(p,\Omega)^{\frac{1}{p-1}}v_{\Omega}(x),
\end{equation}
and
\begin{equation}
\label{dernorm}
\Phi'(u)F(\nabla u) \le \frac{p}{p-1}\lambda_{F}(p,\Omega)^{\frac{1}{p-1}}F(\nabla v_{\Omega})\quad\text{on }\de\Omega,
\end{equation}
where $v_{\Omega}$ is the stress function of $\Omega$. 
\end{prop}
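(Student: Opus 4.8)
The plan is to deduce \eqref{disphi} from the weak comparison principle for $\Qp$, by showing that $P:=\Phi(u)$ is a subsolution of the torsion-type equation $-\Qp P=\bigl(\tfrac{p}{p-1}\bigr)^{p-1}\lambda_F(p,\Omega)$ in $\Omega$ — the very equation, with the same vanishing boundary datum, solved by $w:=\tfrac{p}{p-1}\lambda_F(p,\Omega)^{1/(p-1)}v_\Omega$ (indeed $\Qp$ is positively $(p-1)$-homogeneous in the gradient, so $-\Qp(cv_\Omega)=c^{p-1}$). The input is the pointwise gradient estimate furnished by Theorem \ref{prsperb} applied to $u$, i.e. to \eqref{pb_gen} with $f(s)=\lambda_F(p,\Omega)s^{p-1}$ (admissible since $\de\Omega\in C^2$ and $\mathcal H_F\ge 0$): it is precisely $(p-1)F^p(\nabla u)\le\lambda_F(p,\Omega)(M^p-u^p)$, equivalently $F(\nabla u)\le\Lambda(u)$ with $\Lambda(s):=\bigl(\lambda_F(p,\Omega)/(p-1)\bigr)^{1/p}(M^p-s^p)^{1/p}$. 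Performing the substitution $t=(p-1)^{1/p}\tau$ in the definition of $\Phi$ one gets the cleaner form $\Phi(s)=(\pi_p/2)^{q}-\lambda_F(p,\Omega)^{1/(p-1)}\Psi(s)^{q}$, where $q:=p/(p-1)$ and $\Psi(s):=\int_s^M\Lambda(\tau)^{-1}\,d\tau$; hence $\Phi(0)=0$, $\Phi$ is increasing with $\Phi'$ bounded on $[0,M]$, so $P\in W_0^{1,p}(\Omega)$.

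To establish the subsolution property I would use $\nabla P=\Phi'(u)\nabla u$, the homogeneity of $F$ and $F_\xi$, test the weak formulation of the eigenvalue equation with $(\Phi'(u))^{p-1}\varphi$ (for $\varphi\in C_c^\infty(\Omega)$, $\varphi\ge0$), and the Euler identity $\nabla u\cdot F_\xi(\nabla u)=F(\nabla u)$, obtaining, in the weak sense,
\[
-\Qp P=\lambda_F(p,\Omega)\,u^{p-1}(\Phi'(u))^{p-1}-\bigl[(\Phi')^{p-1}\bigr]'(u)\,F^p(\nabla u).
\]
A direct differentiation of the explicit $\Phi$, whose algebraic core is the identity $\lambda_F(p,\Omega)\,s^{p-1}\Lambda(s)^{-(p-1)}=(p-1)\abs{\Lambda'(s)}$, gives $\bigl[(\Phi')^{p-1}\bigr]'(s)=-\bigl(\tfrac{p}{p-1}\bigr)^{p-1}\lambda_F(p,\Omega)\,\Lambda(s)^{-p}G(s)$ and $\lambda_F(p,\Omega)\,s^{p-1}(\Phi'(s))^{p-1}=\bigl(\tfrac{p}{p-1}\bigr)^{p-1}\lambda_F(p,\Omega)\,(1-G(s))$, where $G(s):=1+(p-1)\Psi(s)\Lambda'(s)$, so that
\[
-\Qp P=\Bigl(\tfrac{p}{p-1}\Bigr)^{p-1}\lambda_F(p,\Omega)\Bigl(1-G(u)+\Lambda(u)^{-p}G(u)\,F^p(\nabla u)\Bigr).
\]
Granted $G\ge 0$ on $[0,M]$, the bound $F^p(\nabla u)\le\Lambda(u)^p$ then forces $-\Qp P\le\bigl(\tfrac{p}{p-1}\bigr)^{p-1}\lambda_F(p,\Omega)$ a.e.\ in $\Omega$ (the critical set $\{\nabla u=0\}$ is harmless, $F^p(\nabla u)$ vanishing there). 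Thus $P$ is a weak subsolution with $P=w=0$ on $\de\Omega$, and comparison yields $P\le w$, i.e. \eqref{disphi}.

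The main obstacle is therefore the elementary claim $G\ge 0$. After scaling to $M=1$, it reads $(p-1)\,s^{p-1}(1-s^p)^{-(p-1)/p}\int_s^1(1-\tau^p)^{-1/p}\,d\tau\le 1$ for $s\in[0,1)$, which I would prove from the antiderivative identity
\[
\frac{d}{d\tau}\left[-\frac{(1-\tau^p)^{(p-1)/p}}{(p-1)\tau^{p-1}}\right]=(1-\tau^p)^{-1/p}+\tau^{-p}(1-\tau^p)^{(p-1)/p}\ge(1-\tau^p)^{-1/p},
\]
integrating over $[s,1]$ to get $\int_s^1(1-\tau^p)^{-1/p}\,d\tau\le(1-s^p)^{(p-1)/p}/\bigl((p-1)s^{p-1}\bigr)$; for $p=2$ this is just $\tan\theta\ge\theta$ via $s=\cos\theta$.

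Finally, \eqref{dernorm} follows from \eqref{disphi}. We have $\Phi(u)\le\tfrac{p}{p-1}\lambda_F(p,\Omega)^{1/(p-1)}v_\Omega$ in $\Omega$, with equality on $\de\Omega$ since both sides vanish there. As $\de\Omega\in C^2$, the Hopf lemma gives $\nabla u\ne0\ne\nabla v_\Omega$ on $\de\Omega$, both parallel to the inner unit normal; differentiating the inequality along that normal yields $\abs{\nabla\Phi(u)}\le\tfrac{p}{p-1}\lambda_F(p,\Omega)^{1/(p-1)}\abs{\nabla v_\Omega}$ on $\de\Omega$. Since $\nabla\Phi(u)=\Phi'(0)\,\nabla u$ (as $u=0$ on $\de\Omega$) with $\Phi'(0)>0$ and $F$ is $1$-homogeneous, this is exactly $\Phi'(u)F(\nabla u)\le\tfrac{p}{p-1}\lambda_F(p,\Omega)^{1/(p-1)}F(\nabla v_\Omega)$ on $\de\Omega$.
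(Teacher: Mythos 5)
Your proof is correct and follows essentially the same route as the paper: both show that $\Phi(u)$ is a subsolution of the torsion-type equation solved by $\tfrac{p}{p-1}\lambda_{F}(p,\Omega)^{1/(p-1)}v_{\Omega}$ by combining the gradient bound of Theorem \ref{prsperb} with an elementary one-variable inequality, conclude \eqref{disphi} by comparison, and obtain \eqref{dernorm} from the Hopf lemma at the boundary. Your condition $G\ge 0$ is exactly the paper's claim $B\ge 0$ (the paper verifies it via $B(M)=0$, $B'\le 0$ rather than your antiderivative identity), and your convex-combination rearrangement of $-\Qp\Phi(u)$ is just a reorganization of the paper's two-factor factorization.
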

\begin{proof}
In order to prove \eqref{disphi}, we will show that 
\begin{equation}
\label{confronto}
-\mathcal Q_{p}[\Phi] \le -\mathcal Q_{p}\left[\frac{p}{p-1}\lambda_{F}(p,\Omega)^{\frac{1}{p-1}}v_{\Omega}\right]=\lambda_{F}(p,\Omega)\left(\frac{p}{p-1}\right)^{p-1}.
\end{equation}
By the comparison principle, being $\Phi(u)=v_{\Omega}=0$ on $\de \Omega$, then \eqref{disphi} holds.

Denoting by $\varphi(u)=\displaystyle\int_{u(p-1)^{\frac1p}}^{{M(p-1)^{\frac1p}}}
{\left(M^{p}-\frac{t^{p}}{p-1}\right)^{-\frac1p}}dt
$, we have:
\[
\Phi'(u) = q(p-1)^{\frac1p}\varphi(u)^{q-1}\left(M^{p}-u^{p}\right)^{-\frac{1}{p}},
\]
and
\begin{align*}
&\Phi''(u)=\\&=-q(q-1)(p-1)^{\frac2p}\varphi(u)^{q-2}\left(M^{p}-u^{p}\right)^{-\frac2p}+q(p-1)^{\frac1p}\varphi(u)^{q-1}\left(M^{p}-u^{p}\right)^{-\frac1p-1}u^{p-1}=\\&=
q(p-1)^{\frac1p}\varphi(u)^{q-1}\left(M^{p}-u^{p}\right)^{-\frac1p}
\left[\frac{u^{p-1}}{M^{p}-u^{p}}- (q-1)(p-1)^{\frac1p}\varphi(u)^{-1}\frac{1}{\left(M^{p}-u^{p}\right)^{\frac1p}}\right]\\
&=\Phi'(u)
\left[\frac{u^{p-1}}{M^{p}-u^{p}}- (q-1)(p-1)^{\frac1p}\varphi(u)^{-1}\frac{1}{\left(M^{p}-u^{p}\right)^{\frac1p}}\right]=\Phi'(u)\Psi(u),
\end{align*}
where we denoted last square bracket with $\Psi(u)$.
\begin{multline*}
Q_{p}\Phi(u)=\dive\left[(\Phi')^{p-1}F(\nabla u)^{p-1}F_{\xi}(\nabla u)\right]=\\=(\Phi')^{p-1}Q_{p}u+(p-1)(\Phi')^{p-2}\Phi''(u) F(\nabla u)^{p}
=\\=
(\Phi')^{p-1}\left[-\lambda_{F}(p,\Omega) u^{p-1}+(p-1)F(\nabla u)^{p}\Psi(u)\right].
\end{multline*}
To prove the claim we need to show that \eqref{confronto} holds, that is
\[
(\Phi')^{p-1}\left[-\lambda_{F}(p,\Omega) u^{p-1}+(p-1)F(\nabla u)^{p}\Psi(u)\right]+q^{p-1}\lambda_{F}(p,\Omega) \ge 0.
\]
Substituting, we get:
\begin{multline*}
-\lambda_{F}(p,\Omega) u^{p-1}+(p-1)F(\nabla u)^{p} \left[\frac{u^{p-1}}{M^{p}-u^{p}}- (q-1)(p-1)^{\frac1p}\varphi(u)^{-1}\frac{1}{\left(M^{p}-u^{p}\right)^{\frac1p}}\right]+\\+\frac{\lambda_{F}(p,\Omega)[M^{p}-u^{p}]^{\frac{p-1}{p}}}{(p-1)^{\frac1q}\varphi(u)}=\\
=\left\{
(p-1)^{-\frac{1}{q}}\varphi(u)^{-1}\left[M^{p}-u^{p}\right]^{1-\frac1p}
-u^{p-1}\right\}
\left[
\lambda_{F}(p,\Omega)-\frac{(p-1)F(\nabla u)^{p}}{M^{p}-u^{p}}
\right]
\end{multline*}
The function in the last square brackets is nonnegative, by \eqref{sperbineq}. To conclude, we show that the function
\[
B(u):=\left[M^{p}-u^{p}\right]^{1-\frac1p}
-(p-1)^{1-\frac{1}{p}}u^{p-1}\varphi(u)
\]
is nonnegative, and this is true, being $B(M)=0$ and $B'\le 0$. This concludes the proof of \eqref{disphi}. Finally by computing the derivative of $\Phi$ with respect to the anisotropic normal $ n_{F}$ on $\de\Omega=\{u=0\}$, we have:
\[
\frac{\de \Phi}{\de n_{F}} = \nabla \Phi \cdot F_{\xi}(-\nabla u)=-\Phi'(u)F(\nabla u)\quad\text{on }\de\Omega.
\]
Recalling \eqref{disphi}, by Hopf lemma we get 
\[
\frac{\de \Phi}{\de n_{F}} \ge  \frac{p}{p-1}\lambda_{F}(p,\Omega)^{\frac{1}{p-1}} \frac{\de v_{\Omega}}{\de n_{F}} \quad\text{on }\de\Omega,
\]
then
\[
\Phi'(u)F(\nabla u) \le \frac{p}{p-1}\lambda_{F}(p,\Omega)^{\frac{1}{p-1}}F(\nabla v_{\Omega})\quad\text{on }\de\Omega,
\]
which is \eqref{dernorm}, and this concludes the proof of the theorem.
\end{proof}

\section{Applications}
Now we prove several inequalities involving $\lambda_{F}(p,\Omega)$, $R_{F}(\Omega)$, $h_{F}(\Omega)$, $M_{v_{\Omega}}$, $E_{F}(p,\Omega)$. The main estimates that we prove using the $\mathcal P$-function method (Theorem \ref{her}, Proposition \ref{bettercheegerprop}, Theorem \ref{paythm} and Theorem \ref{ue2thm}) are stated for $C^{2}$ bounded domains in $\R^{N}$ with nonnegative anisotropic mean curvature. Actually, for bounded convex sets the $C^{2}$ regularity is not needed. This can be proved approximating $\Omega$ in the Hausdorff distance by an increasing sequence of strictly convex smooth domains contained in $\Omega$. A similar argument has been used, for example, in \cite{dpgg}.

\subsection{Anisotropic Hersch inequality} 
\begin{thm}
\label{her}
Let $\Omega$ be a bounded $C^{2}$ domain in $ \R^{N}$, $ N \ge 2$, with nonnegative anisotropic mean curvature $\mathcal H_{F}\ge 0$ on $\de\Omega$, then the following anisotropic Hersch inequality holds
\begin{equation}
\label{H}
\lambda_{F}(p,\Omega) \ge \left(\frac{\pi_{p}}{2}\right)^{p}\frac{1}{R_F(\Omega)^{p}},
\end{equation}
where $R_F(\Omega)$ is the anisotropic inradius defined in \eqref{inrad}.
\end{thm}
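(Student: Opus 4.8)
The plan is to exploit the maximum principle for the $\mathcal P$-function established in Theorem \ref{prsperb}, specialized to the eigenfunction $u$ as in Remark \ref{rmp}, which gives the pointwise gradient bound $(p-1)F^p(\nabla u) \le \lambda_F(p,\Omega)(M^p - u^p)$ in $\overline\Omega$, where $M = \max_{\overline\Omega} u$. Writing $\lambda = \lambda_F(p,\Omega)$ for brevity, this rearranges to
\[
\frac{F(\nabla u)}{(M^p - u^p)^{1/p}} \le \left(\frac{\lambda}{p-1}\right)^{1/p} \quad \text{in } \overline\Omega.
\]
First I would pick a point $x_M \in \Omega$ where $u(x_M) = M$ and a point $x_0 \in \de\Omega$; along any curve joining $x_0$ to $x_M$ we will integrate this differential inequality. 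The key observation is that the anisotropic distance $d_F$ has $F(\nabla d_F) = 1$ a.e. (equation \eqref{Fd}), so $d_F$ is the natural "arc-length" parameter for the anisotropic metric, and by \eqref{imp} one has $|\nabla u \cdot \nabla d_F| \le F(\nabla u) F^o(\nabla d_F)$. I would rather reason along the gradient flow of $u$, or more cleanly: since $R_F(\Omega) = \sup_\Omega d_F$, it suffices to produce a path from the boundary to $x_M$ whose anisotropic length is at most $R_F(\Omega)$ — but one must be careful, so the cleaner route is to consider the function $g(u)$ where $g$ is an antiderivative making the chain rule produce exactly the bound.

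Concretely, I would introduce $G(\tau) = \int_0^\tau (M^p - s^p)^{-1/p}\, ds$ for $\tau \in [0, M)$, and note that $G$ is increasing with $G(M^-) = \int_0^M (M^p - s^p)^{-1/p} ds$. Substituting $s = M\sigma$ shows $\int_0^M (M^p - s^p)^{-1/p} ds = \int_0^1 (1-\sigma^p)^{-1/p} d\sigma$, and comparing with \eqref{defpp} (after the substitution $t = \sigma (p-1)^{1/p}$ there) one finds $\int_0^{(p-1)^{1/p}} [1 - t^p/(p-1)]^{-1/p} dt = (p-1)^{1/p}\int_0^1 (1-\sigma^p)^{-1/p} d\sigma$, hence $\int_0^M (M^p-s^p)^{-1/p} ds = \frac{1}{(p-1)^{1/p}} \cdot \frac{\pi_p}{2}$. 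Now consider $w(x) := G(u(x))$; where $\nabla u \ne 0$ we have $\nabla w = G'(u)\nabla u = (M^p - u^p)^{-1/p}\nabla u$, so $F(\nabla w) = F(\nabla u)/(M^p - u^p)^{1/p} \le (\lambda/(p-1))^{1/p}$ by the $\mathcal P$-function inequality. Thus $w$ is an anisotropically Lipschitz function with $F(\nabla w) \le (\lambda/(p-1))^{1/p}$ a.e., vanishing on $\de\Omega$, and with $\sup_\Omega w = G(M^-) = \frac{1}{(p-1)^{1/p}}\frac{\pi_p}{2}$.

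The final step uses a comparison of $w$ with a multiple of the anisotropic distance: since $F(\nabla w) \le c$ a.e. with $c = (\lambda/(p-1))^{1/p}$ and $w = 0$ on $\de\Omega$, one shows $w(x) \le c\, d_F(x)$ for all $x \in \Omega$. This is standard: for $x \in \Omega$ and $y \in \de\Omega$, integrating $F(\nabla w) \le c$ along the segment (or using that $w/c$ is a subsolution of the eikonal-type inequality $F(\nabla \cdot) \le 1$ vanishing on the boundary, hence $\le d_F$ by the comparison characterization of the anisotropic distance in \cite{cm07}) gives $w(x) \le c\, F^o(x-y)$; taking the infimum over $y$ yields $w(x) \le c\, d_F(x) \le c\, R_F(\Omega)$. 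Evaluating at $x = x_M$ gives $\frac{1}{(p-1)^{1/p}}\frac{\pi_p}{2} = w(x_M) \le c\, R_F(\Omega) = (\lambda/(p-1))^{1/p} R_F(\Omega)$, i.e. $\frac{\pi_p}{2} \le \lambda^{1/p} R_F(\Omega)$, which raising to the $p$-th power is exactly \eqref{H}. The main obstacle is the rigorous handling of the critical set $\{\nabla u = 0\}$ and the regularity of $w$ at $\{u = M\}$ where $G'$ blows up — one should argue that $w$ extends to a function in $W^{1,\infty}_{\mathrm{loc}}$ away from the maximum set, or work with $u \le M - \eta$ and let $\eta \to 0$, and invoke that the comparison with $d_F$ only needs $w$ to be locally anisotropically Lipschitz with the right gradient bound away from a negligible set; the constant-$\mathcal P$ case (item 3 in the proof of Theorem \ref{prsperb}) corresponds precisely to equality and to $\Omega$ being a slab, which also explains the sharpness claimed later.
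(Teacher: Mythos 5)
Your proposal is correct and is essentially the paper's own argument in different packaging: the paper takes the same pointwise bound $(p-1)F^p(\nabla u)\le\lambda_F(p,\Omega)(M^p-u^p)$ from Theorem \ref{prsperb}, integrates the directional derivative $\langle\nabla u,v\rangle\le F(\nabla u)F^o(v)$ along the segment joining the maximum point $x_M$ to its nearest boundary point $\bar x$ (for which $F^o(x_M-\bar x)=d_F(x_M)\le R_F(\Omega)$), and identifies $\int_0^M(M^p-s^p)^{-1/p}\,ds=(p-1)^{-1/p}\pi_p/2$ exactly as you do. Your reformulation via $w=G(u)$ and the eikonal comparison $w\le c\,d_F$ is just that segment integration stated globally, and your care about the critical set and the blow-up of $G'$ at $u=M$ is sound (the integral converges, so the paper glosses over it harmlessly).
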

\begin{proof}
Let $u$ be a positive eigenfunction relative to $\lambda_{F}(p,\Omega)$ and $v$ a direction of $\R^N$. Let $M=\max_{\overline{\Omega}}u $. Then by Theorem \ref{prsperb} with $f(w)=\lambda w^{p-1}$ and property \eqref{imp} we have
\begin{equation}
\label{der_dir}
\displaystyle \frac{\de u}{\de v}=\langle \nabla u, v \rangle  \le F(\nabla u) F^o(v) \le   \left(\frac{\lambda_{F}(p,\Omega)}{p-1}\right)^{\frac{1}{p}}\left(M^p-u^p\right)^{\frac{1}{p}}F^o(v).
\end{equation}
Let us denote by $x_M$ the point of $\Omega$ such that $M=u(x_M)$, by $\bar x \in \de \Omega$ the point such that $F^o(x_M- \bar x)=d_F(x_M)$ and by $v$ the direction of the straight line joining the points $x_M$ and $\bar x$. Then by   \eqref{der_dir} and being $F^{o}(\bar x-x_{M}) \le R_{F}(\Omega) $, we get
\begin{multline}
\label{1}
\displaystyle \int_{0}^{M(\Omega)} \displaystyle \frac{1}{\left(M^p(\Omega)-u^p\right)^{\frac{1}{p}}}du \le  \displaystyle  \left( \frac{\lambda_{F}(p,\Omega)}{p-1}\right)^{\frac{1}{p}}F^o(v)|\bar x-x_{M}|=\\= \left( \frac{\lambda_{F}(p,\Omega)}{p-1}\right)^{\frac{1}{p}}F^{o}(\bar x-x_{M})\le  \displaystyle  \left( \frac{\lambda_{F}(p,\Omega)}{p-1}\right)^{\frac{1}{p}}R_{F}(\Omega).
\end{multline}
By definition of \eqref{defpp} by a change of variable we get
\begin{equation}
\label{2}
\int_{0}^{M(\Omega)} \displaystyle \frac{1}{\left(M^p(\Omega)-u^p\right)^{\frac{1}{p}}}du=\displaystyle \frac{1}{(p-1)^{\frac{1}{p}}} \,\,\frac{\pi_p}{2}
\end{equation}
Finally, joining \eqref{1} and \eqref{2}, we get the inequality \eqref{H}.
\end{proof}
The equality sign in \eqref{H} holds in the limiting case when $\Omega$ approaches a slab. This will be shown in Theorem \ref{optimality}.

From the Hersch inequality \eqref{H} and the bound \eqref{hru} it immediately holds the following.
\begin{prop}
\label{bettercheegerprop}
Let $\Omega$ be a bounded $C^{2}$ domain in $ \R^{N}$, $ N \ge 2$, with nonnegative anisotropic mean curvature $\mathcal H_{F}\ge 0$ on $\de\Omega$. Then
\begin{equation}
\label{bettercheeger}
\lambda_{F}(p,\Omega)\ge \left(\frac{\pi_{p}}{2N}\right)^{p} h_{F}^{p}(\Omega).
\end{equation}
\end{prop}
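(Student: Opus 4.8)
The plan is simply to chain together the two estimates the statement explicitly refers to; no new idea is needed. First I would apply Theorem \ref{her} to $\Omega$, whose hypotheses ($C^{2}$ boundary and $\mathcal H_{F}\ge 0$ on $\partial\Omega$) are precisely those assumed here, obtaining the anisotropic Hersch bound
\[
\lambda_{F}(p,\Omega)\ge\left(\frac{\pi_{p}}{2}\right)^{p}\frac{1}{R_{F}(\Omega)^{p}}.
\]
Next I would invoke inequality \eqref{hru}, which holds for an arbitrary bounded open set (in particular for the $\Omega$ at hand, which satisfies even stronger assumptions), namely $h_{F}(\Omega)\le N/R_{F}(\Omega)$; since both members are nonnegative this rewrites as $R_{F}(\Omega)^{-1}\ge h_{F}(\Omega)/N$.

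Raising this last inequality to the power $p>1$ (legitimate, both sides being nonnegative) and substituting into the Hersch bound yields
\[
\lambda_{F}(p,\Omega)\ge\left(\frac{\pi_{p}}{2}\right)^{p}\frac{1}{R_{F}(\Omega)^{p}}\ge\left(\frac{\pi_{p}}{2}\right)^{p}\left(\frac{h_{F}(\Omega)}{N}\right)^{p}=\left(\frac{\pi_{p}}{2N}\right)^{p}h_{F}(\Omega)^{p},
\]
which is exactly \eqref{bettercheeger}.

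There is essentially no obstacle here: the argument is a one-line composition of two results already available, and the only point worth checking is the compatibility of hypotheses, which is immediate, since \eqref{hru} requires no regularity or curvature assumption while Theorem \ref{her} is used under precisely the hypotheses of the present statement. As a concluding remark one could add, in the spirit of the discussion preceding Section 4.1, that for convex $\Omega$ the $C^{2}$ assumption may be removed by approximating $\Omega$ in the Hausdorff distance by an increasing sequence of smooth strictly convex subdomains, so that \eqref{bettercheeger} in fact holds for every bounded convex set.
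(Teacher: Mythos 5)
Your argument is correct and is exactly the paper's own proof: the proposition is stated as an immediate consequence of the anisotropic Hersch inequality \eqref{H} combined with the bound $h_{F}(\Omega)\le N/R_{F}(\Omega)$ from \eqref{hru}. Nothing further is needed.
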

Hence for $p\ge \frac{2N}{\pi_{p}}$ the inequality \eqref{bettercheeger} gives a better constant than \eqref{cheeger}. 

\subsection{An upper bound for the efficiency ratio}
As a consequence of the Theorem \ref{efrat} we obtain the following inequality:
\begin{thm}
\label{paythm}
Let $\Omega$ be a bounded $C^{2}$ domain in $ \R^{N}$, $ N \ge 2$, with nonnegative anisotropic mean curvature $\mathcal H_{F}\ge 0$ on $\de\Omega$, then
\begin{equation}
\label{payneineq}
 \left(\frac{p-1}{p}\right)^{p-1}\left(\frac{\pi_{p}}{2}\right)^{p} \le \lambda_{F}(p,\Omega) M_{v_{\Omega}}^{p-1},
\end{equation}
where $M_{v_{\Omega}}=\max_{\overline{\Omega}}v_{\Omega}$. 
\end{thm}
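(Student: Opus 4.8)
The natural route is to exploit Proposition~\ref{efrat}, which already packages the $\mathcal P$-function information into a comparison between $\Phi(u)$ and a multiple of the torsion function $v_\Omega$. The plan is to evaluate the estimate \eqref{disphi} at a point where $u$ attains its maximum $M$. First I would note that $\Phi(M)=(\pi_p/2)^{p/(p-1)}$, since the inner integral in the definition of $\Phi$ collapses to zero when $s=M$; this is exactly the normalization used right before Proposition~\ref{efrat}. On the other side, at that same point $x_M$ we have $v_\Omega(x_M)\le M_{v_\Omega}=\max_{\overline\Omega}v_\Omega$. Plugging these into \eqref{disphi} gives
\[
\left(\frac{\pi_p}{2}\right)^{\frac{p}{p-1}}=\Phi(M)\le \frac{p}{p-1}\,\lambda_F(p,\Omega)^{\frac{1}{p-1}}\,M_{v_\Omega}.
\]

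**Conclusion.** It then remains only to raise both sides to the power $p-1>0$, which preserves the inequality, obtaining
\[
\left(\frac{\pi_p}{2}\right)^{p}\le \left(\frac{p}{p-1}\right)^{p-1}\lambda_F(p,\Omega)\,M_{v_\Omega}^{p-1},
\]
and then rearrange, bringing the factor $\left(\frac{p}{p-1}\right)^{p-1}$ to the left as $\left(\frac{p-1}{p}\right)^{p-1}$, which is precisely \eqref{payneineq}. One should double-check that $x_M\in\Omega$ (an interior point) so that $\Phi(u(x_M))$ is well defined and \eqref{disphi} applies there; this follows from $u>0$ in $\Omega$ and $u=0$ on $\partial\Omega$, so the maximum is interior.

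**Main obstacle.** There is essentially no analytic obstacle here — all the work has been absorbed into Proposition~\ref{efrat} and the definition of $\Phi$. The only thing to be careful about is the bookkeeping of the normalization constant $\Phi(M)$: one must verify that the integral $\int_{s(p-1)^{1/p}}^{M(p-1)^{1/p}}(M^p-t^p/(p-1))^{-1/p}\,dt$ equals $\pi_p/2$ when $s=0$ (so that $\Phi(0)=0$, matching the boundary condition used in the comparison) and vanishes when $s=M$ (so that $\Phi(M)=(\pi_p/2)^{p/(p-1)}$). Both are immediate from the change of variables displayed before Proposition~\ref{efrat}. Thus the proof is a one-line evaluation of \eqref{disphi} at the maximum point of $u$, followed by raising to the power $p-1$.
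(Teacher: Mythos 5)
Your proof is correct and is essentially identical to the paper's own argument: both evaluate \eqref{disphi} at the maximizer of $u$, use $\Phi(M)=(\pi_p/2)^{p/(p-1)}$ together with $v_\Omega(x_M)\le M_{v_\Omega}$, and raise to the power $p-1$ to obtain \eqref{payneineq}.
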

\begin{proof}
The proof is a direct consequence of Theorem \ref{efrat} and of the definition \eqref{defpp} of $\pi_p$. Indeed by \eqref{disphi} and the explicit expression of $\Phi$, evaluating  both sides  at the maximizer $x_m$ of $u$ we obtain
\begin{equation*}
\label{app}
\left(\frac{p-1}{p}\right)^{p-1} \left(\frac{\pi_{p}}{2}\right)^p \le v_{\Omega}^{p-1}(x_m) \lambda_{F}(p,\Omega) \le \lambda_{F}(p,\Omega) M_{v_{\Omega}}^{p-1}
\end{equation*}
which is the desired inequality \eqref{payneineq}.
\end{proof}
The equality sign in \eqref{payneineq} holds in the limiting case when $\Omega$ approaches a slab. This will be shown in Theorem \ref{optimality}.

\begin{rem}
We observe that  the functional involved in Theorem \ref{paythm} is related to other functionals studied in literature. Indeed it holds that
\begin{equation}
\label{func}
\lambda_F(p,\Omega) \left( \frac{T_F(p,\Omega)}{|\Omega|}\right)^{p-1}\le\lambda_F(p,\Omega)M^{p-1}_{v_{\Omega}} \le\displaystyle \left(\frac{|\Omega|M_{v_{\Omega}}}{T_F(p,\Omega)}\right)^{p-1}.
\end{equation}
The  functional in the left-hand side of \eqref{func} has been studied for example in \cite{vbfnt} for $p=2$ in the Euclidean case. The functional in the right-hand side  of \eqref{func} has been investigated for instance in  \cite{hlp} for $p=2$ in the Euclidean case and  in  \cite{dpgg}  for any $p$ in the anisotropic setting. 
\end{rem}

\begin{rem}
Using  the upper bound in \eqref{stima_max} and \eqref{payneineq} we get directly the anisotropic Hersch inequality for $\lambda_{F}(p,\Omega)$:
\begin{equation*}
\lambda_{F}(p,\Omega) \ge \left(\frac{\pi_{p}}{2}\right)^{p}\frac{1}{R_F(\Omega)^{p}}.
\end{equation*}
%\textcolor{brown}{Moreover repeating  the same argument of Remark 3.4 we get that the equality in \eqref{H2} holds on the strip.}
\end{rem}
Let $u$ be the first eigenfunction relative to $\lambda_{F}(p,\Omega)$ an let us define the anisotropic efficiency ratio  
\begin{equation}
\label{eff}
E_{F}(p,\Omega):=\frac{\|u\|_{p-1}}{|\Omega|^{\frac{1}{p-1}}\|u\|_{\infty}} %\le \frac{1}{(p-1)^{1-\frac{1}{p}}} \left(\frac{2}{\pi_{p}}\right)^{\frac{1}{p-1}}.
\end{equation}
%Now, let us define
%\[
%\psi(u)=-\left(\int_{u(p-1)^{\frac 1p}}^{M(p-1)^{\frac1p}}\frac{dt}{\left(M^{p}-\frac{t^{p}}{p-1}\right)^{\frac1p}}\right)^{\frac{1}{p-1}}.
%\]
We stress that by Remark \ref{rmp} by H\"older inequality, for open bounded convex sets we obtain the following upper bound for \eqref{eff}
\begin{equation}
\label{ue}
E_{F}^p(p,\Omega) \le \frac{1}{p} 
\end{equation}

Actually as a consequence of the Theorem \ref{efrat} we get the following  upper bound for $E_{F}(p,\Omega)$ which in the Euclidean case is due to Payne and Stakgold \cite{ps}.
\begin{thm}
\label{ue2thm}
Let $\Omega$ be a bounded $C^{2}$ domain in $ \R^{N}$, $ N \ge 2$, with nonnegative anisotropic mean curvature $\mathcal H_{F}\ge 0$ on $\de\Omega$, then
\begin{equation}
\label{ue2}
E_{F}(p,\Omega)\le (p-1)^{-\frac{1}{p}} \left(\frac{2}{\pi_{p}}\right)^{\frac{1}{p-1}}.
\end{equation}
\end{thm}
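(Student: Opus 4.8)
The plan is to integrate the pointwise inequality \eqref{disphi} of Theorem \ref{efrat} against the eigenfunction, exactly as the bound \eqref{payneineq} was obtained by evaluating it at one point, but now exploiting the full profile of $\Phi$. First I would recall that $\Phi$ is increasing on $[0,M]$ with $\Phi(0)=0$ and $\Phi(M)=(\pi_p/2)^{p/(p-1)}$, and that \eqref{disphi} reads $\Phi(u(x))\le \frac{p}{p-1}\lambda_F(p,\Omega)^{1/(p-1)}v_\Omega(x)$ pointwise in $\overline\Omega$. Since $v_\Omega$ is the torsion function, $\int_\Omega v_\Omega\,dx = T_F(p,\Omega)$, but more useful here is the layer-cake/coarea viewpoint: integrating \eqref{disphi} against the natural weight coming from the equation $-\mathcal Q_p u = \lambda_F u^{p-1}$. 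Concretely, I would multiply \eqref{disphi} by $u^{p-1}$ (or more precisely use that $-\mathcal Q_p$ applied to $v_\Omega$ is $1$, so $\int_\Omega v_\Omega(-\mathcal Q_p u)\,dx = \int_\Omega u (-\mathcal Q_p v_\Omega)\,dx = \int_\Omega u\,dx$) to convert the right-hand side into something homogeneous in $u$.

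The key step I expect: integrate \eqref{disphi} multiplied by $u^{p-1}$ over $\Omega$. On the left one gets $\int_\Omega \Phi(u)\,u^{p-1}\,dx$; on the right $\frac{p}{p-1}\lambda_F^{1/(p-1)}\int_\Omega v_\Omega u^{p-1}\,dx = \frac{p}{p-1}\lambda_F^{1/(p-1)}\cdot\frac{1}{\lambda_F}\int_\Omega v_\Omega(-\mathcal Q_p u)\,dx = \frac{p}{p-1}\lambda_F^{1/(p-1)-1}\int_\Omega u\,dx$. So the task reduces to bounding $\int_\Omega \Phi(u)u^{p-1}\,dx$ from below in terms of $\|u\|_{p-1}^{p-1}=\int_\Omega u^{p-1}\,dx$ and $\|u\|_\infty=M$. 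The natural inequality is $\Phi(u)\ge (\pi_p/2)^{p/(p-1)} (u/M)^{p-1}$ — this would follow if the function $s\mapsto \Phi(s)/s^{p-1}$ is nondecreasing on $[0,M]$, i.e. $\Phi(s)\ge \Phi(M)(s/M)^{p-1}$, which from the explicit formula for $\Phi$ is the statement that $\varphi(s)^{q}$ decays no faster than a power; this should be checkable by an elementary monotonicity argument on $B$-type auxiliary functions as in the proof of Proposition \ref{efrat}. Granting it, $\int_\Omega \Phi(u)u^{p-1}\,dx \ge (\pi_p/2)^{p/(p-1)} M^{-(p-1)}\int_\Omega u^{2(p-1)}\,dx$, which is the wrong exponent; so instead I would more carefully use $\Phi(u)u^{p-1}\ge$ a quantity whose integral is comparable to $\int u^{p-1}\,dx$, or integrate \eqref{disphi} without the extra weight and pair with the estimate $\int_\Omega u\,dx\ge \|u\|_{p-1}^{p-1}/M^{p-2}$ via Hölder together with $\int_\Omega v_\Omega u^{p-1} = \lambda_F^{-1}\int_\Omega u$.

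Reorganizing, the cleanest route: integrate \eqref{disphi} directly over $\Omega$, giving $\int_\Omega\Phi(u)\,dx\le \frac{p}{p-1}\lambda_F^{1/(p-1)}\int_\Omega v_\Omega\,dx = \frac{p}{p-1}\lambda_F^{1/(p-1)}T_F(p,\Omega)$, but this introduces $T_F$ rather than $\|u\|_{p-1}$. To get $E_F$, the right pairing is against $u^{p-1}$ as above, producing $\int_\Omega\Phi(u)u^{p-1}\,dx\le \frac{p}{p-1}\lambda_F^{(2-p)/(p-1)}\int_\Omega u\,dx$, and then on the left I would bound $\Phi(u)\ge (\pi_p/2)^{p/(p-1)}(u/M)^{p-1}$ while on the right bound $\int_\Omega u\,dx\le |\Omega|^{1/q}\,(\int_\Omega u^p\,dx)^{1/p}$ and finally relate $\int u^p$, $\int u^{p-1}$, $M$ and $\lambda_F$ using the eigenvalue equation; combining and simplifying the $\lambda_F$ powers should collapse to \eqref{ue2}. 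The main obstacle will be getting the exponents of $M$, $|\Omega|$, $\lambda_F$ to balance — this requires choosing the weight in the integration of \eqref{disphi} correctly and invoking Hölder in the right place; once the bookkeeping is set up, the only analytic input is the elementary monotonicity $\Phi(s)\ge \Phi(M)(s/M)^{p-1}$, proved by the same $B(s)\le 0$ argument used for \eqref{disphi}, and the rest is the definition \eqref{defpp} of $\pi_p$.
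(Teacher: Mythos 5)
Your proposal does not close, and the central difficulty is not just bookkeeping. The first concrete error is the identity you lean on, $\int_\Omega v_\Omega(-\mathcal Q_p u)\,dx=\int_\Omega u(-\mathcal Q_p v_\Omega)\,dx=\int_\Omega u\,dx$: this symmetry holds only for $p=2$, because $\mathcal Q_p$ is nonlinear. Integration by parts gives $\int_\Omega v_\Omega(-\mathcal Q_p u)\,dx=\int_\Omega F^{p-1}(\nabla u)\,F_\xi(\nabla u)\cdot\nabla v_\Omega\,dx$, which is not $\int_\Omega F^{p-1}(\nabla v_\Omega)F_\xi(\nabla v_\Omega)\cdot\nabla u\,dx$ for $p\ne 2$, so the weighted integration of \eqref{disphi} never produces $\int_\Omega u\,dx$ on the right. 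Beyond that, each of the routes you sketch is abandoned at the decisive moment: the weight $u^{p-1}$ yields $\int_\Omega u^{2(p-1)}\,dx$ on the left (you note this is ``the wrong exponent''), the unweighted integration introduces $T_F(p,\Omega)$ instead of $\|u\|_{p-1}$, and the final paragraph asserts that the powers of $M$, $|\Omega|$, $\lambda_F$ ``should collapse'' without exhibiting a combination that actually does. As written, no complete chain of inequalities leading to \eqref{ue2} is produced.

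The idea you are missing is that the paper does not integrate the interior bound \eqref{disphi} at all; it uses the \emph{boundary} inequality \eqref{dernorm}. On $\partial\Omega$ one has $u=0$, so $\Phi'(0)$ is an explicit constant, namely $\Phi'(0)=q(p-1)^{1/p}\left(\tfrac{\pi_p}{2}\right)^{q-1}M^{-1}$, which is exactly how both $\pi_p$ and $M$ enter. Raising \eqref{dernorm} to the power $p-1$ and integrating over $\partial\Omega$ against the anisotropic surface measure $F(n_{\mathcal E})\,d\sigma$, the divergence theorem applied to the two equations converts the boundary fluxes into volume quantities: $\int_{\partial\Omega}F(\nabla u)^{p-1}F(n_{\mathcal E})\,d\sigma=\lambda_F(p,\Omega)\int_\Omega u^{p-1}\,dx$ and $\int_{\partial\Omega}F(\nabla v_\Omega)^{p-1}F(n_{\mathcal E})\,d\sigma=|\Omega|$. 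The factors of $\lambda_F(p,\Omega)$ cancel and one obtains directly $(p-1)^{1/q}\tfrac{\pi_p}{2}\int_\Omega u^{p-1}\,dx\le M^{p-1}|\Omega|$, which is \eqref{ue2}. This is why no auxiliary monotonicity of $\Phi(s)/s^{p-1}$ and no Hölder juggling are needed.
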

\begin{proof}
Passing to the power $p-1$ in both sides of  \eqref{dernorm}, integrating on $\de \Omega$ and using the equations, by the divergence theorem we have that
\[
(p-1)^{\frac1q} \frac{\pi_{p}}{2} \int_{\Omega}u^{p-1}dx \le M^{p-1} |\Omega|
\]
that gives the following upper bound the ``efficiency ratio'' $E_{p}$:
\[
E_{F}(p,\Omega)=\frac{\|u\|_{p-1}}{|\Omega|^{\frac{1}{p-1}}\|u\|_{\infty}} \le \frac{1}{(p-1)^{\frac{1}{p}}} \left(\frac{2}{\pi_{p}}\right)^{\frac{1}{p-1}}.
\]
\end{proof}

\begin{rem}
We observe that the bound in \eqref{ue2} improves the one given in \eqref{ue}. 
\end{rem}

Finally we are in position  to give the following optimality result.
\begin{thm}
\label{optimality}
The equality sign in  \eqref{hrl}, \eqref{pol}, \eqref{rc},  \eqref{H},  \eqref{payneineq}, and in the upper bound of \eqref{stima_max} holds in the limiting case when $\Omega$ approaches a suitable infinite slab. 
\end{thm}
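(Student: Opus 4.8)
The plan is to exhibit a single family of domains, namely the $N$-rectangles $\Omega_{a,k}=]-a,a[\,\times\,]-k,k[^{N-1}$ from Lemma \ref{fantasticolemma}, and to compute the limit as $k\to+\infty$ of every quantity appearing in the six inequalities, showing that each pair of sides converges to the same value. The natural normalization is to fix $a$ (equivalently, fix the "width" of the limiting slab) and let $k\to\infty$; by the scaling property in the Proposition on scaling/monotonicity, one may even take $a$ such that $aF^o(e_1)=R_F(\Omega_{a,k})=1$ without loss of generality. The guiding principle is that, in the limit, all the relevant objects—eigenfunction, torsion function, Cheeger set—should depend only on the first variable $x_1$, reducing everything to the one-dimensional problem on $]-a,a[$ whose eigenvalue is exactly $(\pi_p/2a)^p$ by the very definition \eqref{defpp} of $\pi_p$.

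The key steps, in order, are as follows. First I would record the elementary geometric facts for $\Omega_{a,k}$: by Lemma \ref{fantasticolemma}, $P_F(\Omega_{a,k})/|\Omega_{a,k}|\to 1/(aF^o(e_1))=1/R_F(\Omega_{a,k})$, which simultaneously forces equality in the limit in \eqref{hrl} (lower bound $1/R_F\le h_F$) and, combined with the chain $1/R_F\le h_F\le P_F/|\Omega|\to 1/R_F$, forces $h_F(\Omega_{a,k})\to 1/R_F$ as well, so the Cheeger set degenerates to the slab. Second, I would identify $\lim_k \lambda_F(p,\Omega_{a,k})$: the upper bound \eqref{pol} gives $\lambda_F(p,\Omega_{a,k})\le(\pi_p/2)^p(P_F/|\Omega|)^p\to(\pi_p/2aF^o(e_1))^p$, while the Hersch lower bound \eqref{H} gives $\lambda_F(p,\Omega_{a,k})\ge(\pi_p/2)^p R_F^{-p}=(\pi_p/2aF^o(e_1))^p$ for all $k$; hence $\lambda_F(p,\Omega_{a,k})\to(\pi_p/2)^p R_F^{-p}$, which is precisely equality in the limit for \eqref{H}, and—using $h_F\to 1/R_F$—equality in the limit for both \eqref{pol} and \eqref{rc}. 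Third, for \eqref{payneineq} and the upper bound in \eqref{stima_max}, I would show $M_{v_{\Omega_{a,k}}}\to (aF^o(e_1))^q/q = R_F^q/q$; one direction is the upper bound in \eqref{stima_max} itself, and for the lower direction I would use that the one-dimensional torsion function $v_0(x_1)$ solving $-(|\varphi'|^{p-2}\varphi')'F$-type equation on $]-a,a[$ is a subsolution (or compare $v_{\Omega_{a,k}}$ from below on an interior sub-rectangle), giving $\liminf_k M_{v_{\Omega_{a,k}}}\ge R_F^q/q$. Then $\lambda_F(p,\Omega_{a,k})M_{v_{\Omega_{a,k}}}^{p-1}\to (\pi_p/2)^p R_F^{-p}\cdot(R_F^q/q)^{p-1} = (\pi_p/2)^p q^{-(p-1)} = ((p-1)/p)^{p-1}(\pi_p/2)^p$, which is equality in the limit for \eqref{payneineq}.

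The main obstacle I expect is making the "$v_{\Omega_{a,k}}\to v_0(x_1)$" and "eigenfunction $\to \cos_p$-type profile in $x_1$" convergence statements rigorous enough to pass the maxima to the limit, since the domains are unbounded in the limit and one cannot literally plug a slab into the functionals. The clean way around this is to avoid any direct limit of the PDE solutions and instead sandwich the relevant scalar quantities between inequalities already proven in the paper, as sketched above: the upper bounds \eqref{pol}, \eqref{stima_max}, \eqref{hru} on one side and the $\mathcal P$-function lower bounds \eqref{H}, \eqref{hrl}, \eqref{payneineq} on the other, with the geometric limit \eqref{fantastica} as the only genuine computation. For the one place this is not quite enough—the lower bound on $\liminf_k M_{v_{\Omega_{a,k}}}$—I would insert the interior sub-rectangle $]-a(1-\eta),a(1-\eta)[\times]-k/2,k/2[^{N-1}$, use monotonicity of $M_v$ under inclusion together with the explicit one-dimensional torsion value on a slab as a lower comparison, and then let $\eta\to 0$. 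A closing remark would note that $a$ is arbitrary, so the equality is attained along a genuine one-parameter family of degenerating slabs, and that the same family shows the constants in all six inequalities are best possible.
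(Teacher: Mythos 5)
Your proposal is correct and follows essentially the same route as the paper: both arguments sandwich each scalar quantity between the $\mathcal P$-function lower bounds \eqref{H}, \eqref{rc}, \eqref{payneineq}, \eqref{hrl} and the geometric upper bounds \eqref{hru}, \eqref{pol}, \eqref{stima_max}, and then let the rectangles $\Omega_{a,k}$ of Lemma \ref{fantasticolemma} degenerate so that $P_F(\Omega_{a,k})R_F(\Omega_{a,k})/|\Omega_{a,k}|\to 1$ collapses the chains, with \eqref{fantastica} as the only genuine computation. The one minor difference is that your separate sub-rectangle comparison for $\liminf_k M_{v_{\Omega_{a,k}}}$ is superfluous: the limit $M_{v_{\Omega_{a,k}}}\to R_F^{q}/q$ already follows from combining the two sandwiches, since they give $\lambda_{F}(p,\Omega_{a,k})\to(\pi_p/2)^pR_F^{-p}$ and $\lambda_{F}(p,\Omega_{a,k})M_{v_{\Omega_{a,k}}}^{p-1}\to((p-1)/p)^{p-1}(\pi_p/2)^p$ simultaneously.
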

\begin{proof}
Let $\Omega$ be a bounded open convex set of $\R^{N}$. Then by \eqref{H}, \eqref{rc}, and the definition of $h_{F}$
we get
\begin{equation}
\label{catenone1}
\left(\frac{\pi_{p}}{2}\right)^{p} \le R_{F}(\Omega)^{p}
\lambda_{F}(p,\Omega) \le \left(\frac{\pi_{p}}{2}\right)^{p} \left(h_{F}(\Omega) R_{F}(\Omega)\right)^{p} \le 
\left(\frac{\pi_{p}}{2}\right)^{p} \left( \frac{P_{F}(\Omega)R_{F}(\Omega)}{|\Omega|}\right)^{p},
\end{equation}
and by \eqref{payneineq}, \eqref{pol} and the upper bound in  \eqref{stima_max} 
\begin{equation}
\label{catenone2}
\left(\frac{p-1}{p}\right)^{p-1} \left(\frac{\pi_{p}}{2}\right)^p \le \lambda_{F}(p,\Omega) M_{v_{\Omega}}^{p-1} \le \left(\frac{p-1}{p}\right)^{p-1} \left( \frac{\pi_p}{2}\right)^p \displaystyle \left(\frac{P_F(\Omega) R_{F}(\Omega)}{|\Omega|} \right)^p.
\end{equation}
Choosing $\Omega=\Omega_{a,k}$ in \eqref{catenone1} and \eqref{catenone2} as in Lemma \ref{fantasticolemma}, and passing to the limit we get the required optimality.
\end{proof}
\begin{rem}
For a general planar open convex set in \cite{vdb}, in the Euclidean case, the author proves the following result
\begin{equation*}
\lambda(\Omega)M_{v_{\Omega}} \le \left(\frac{\pi^2}{8}\right) \left(1+7 \cdot 3^{\frac{2}{3}} \left( \frac{W(\Omega)}{d(\Omega)}\right)^\frac{2}{3}\right),
\end{equation*}
where $\lambda(\Omega)$ is the first Dirichlet eigenvalue of $-\Delta$, $d(\Omega)$ denotes the Euclidean diameter and $W(\Omega )$ the width. Then, for planar open convex set and $p=2$, in the Euclidean case the equality in \eqref{payneineq} holds for the sets such that $\frac{W(\Omega)}{d(\Omega)} \to 0$.
\end{rem}

\begin{rem}
The slab is not optimal for $E_{F}(p,\Omega)$. Indeed, if for example $N=p=2$ and $F=\mathcal E=(\sum_{i}x_{i}^{2})^{1/2}$, for any rectangle $R$ it holds that $E_{\mathcal E}(2,R)=\left(\frac{2}{\pi}\right)^{2}$.
\end{rem}

\section*{Acnowledgements}

This work has been partially supported by GNAMPA of INdAM.

\end{document}